\newtheorem{thm}{Theorem}
\newtheorem{lem}{Lemma}
\newtheorem{cor}{Corollary}
\NewDocumentCommand{\rot}{O{90} O{1em} m}{\makebox[#2][l]{\rotatebox{#1}{#3}}}%
\title{Lower bounds and integrality gaps in simplicial decomposition}
\author{Matthew Ellison\\
Department of Mathematics\\
Dartmouth College\\
\texttt{matthew.ellison.gr@dartmouth.edu}}
\date{}
\begin{document}

\maketitle

\abstract
Let $\mathcal{K}$ be a finite pure simplicial $d$-complex, with oriented facets $\{F_i\}$, which is boundaryless in the sense that $\sum\partial F_i=0$. We call such a $\mathcal{K}$ an \textit{admissible $d$-complex}. Given an admissible $d$-complex, one can ask for the smallest collection $\{T_i\}$ of oriented $(d+1)$-simplices on the vertices of $\mathcal{K}$ which decomposes $\mathcal{K}$ in the sense that $\sum \partial T_i = \mathcal{K}$. Let the minimum size of such a collection be $V_\mathbb{Z}(\mathcal{K})$, and let $V_\mathbb{Q}(\mathcal{K})$ be the relaxed analog where fractional $(d+1)$-simplices may be used. We explain how these quantities may be computed via integer and linear programming, and show how lower bounds may be obtained by exploiting LP-duality. We then prove that $V_\mathbb{Q}$ and $V_\mathbb{Z}$ are both additive under disjoint union and connected sum along a $d$-simplex. The remainder of the paper explores integrality gaps between $V_\mathbb{Z}$ and $V_\mathbb{Q}$ in dimension 1, where we share what we believe is the simplest admissible complex with an integrality gap; and in dimension 2, where we collect some results on integrality gaps for triangulations of the 2-sphere for \cite{doyle2023extending}, to which this is a companion paper.

\section{Introduction}
Let $\mathcal{K}$ be a finite pure simplicial $d$-complex, with oriented facets $F_i$, which is boundaryless in the sense that $\sum\partial F_i=0$. We call such a $\mathcal{K}$ an \textit{admissible $d$-complex}. For example, an admissible $1$-complex is a directed graph where each node has equal in-degree and out-degree; and admissible 2-complexes include the boundaries of the platonic solids (with a natural orientation).

For admissible $d$-complex $\mathcal{K}$, and field $\mathbb{F}$ equal to $\mathbb{Z}$ or $\mathbb{Q}$, let $C_n(\mathcal{K}; \mathbb{F})$ denote the chains of oriented $n$-simplices on the vertices of $\mathcal{K}$ with coefficients in $\mathbb{F}$. For $\alpha$ in $C_n(\mathcal{K};\mathbb{F})$, let $|\alpha|_1$ denote the 1-norm of $\alpha$, i.e. the sum of the absolute values of its coefficients. The main definition is the following notion of volume. Let $V_\mathbb{F}(\mathcal{K})$ be the smallest 1-norm of a chain $\alpha$ in $C_{d+1}(\mathcal{K};\mathbb{F})$ which decomposes $\mathcal{K}$ in the sense that $\partial \alpha=\mathcal{K}$. Informally, $V_\mathbb{Z}$ is the fewest number of $(d+1)$-simplices needed to decompose $\mathcal{K}$, and $V_\mathbb{Q}$ is the fewest number of $(d+1)$-simplices needed to decompose $\mathcal{K}$ when `fractional simplices' are allowed.

In this paper we investigate properties of $V_\mathbb{Z}$ and $V_\mathbb{Q}$, and the integrality gap that may appear between them. In Section \ref{example_section}, $V_\mathbb{Z}$ and $V_\mathbb{Q}$ are computed for two simple examples.
 In Section \ref{lp_ip_section}, we explain how $V_\mathbb{Z}$ and $V_\mathbb{Q}$ can be computed through integer and linear programming, and Section \ref{explicit_lp_ip_section} gives a worked example to illustrate this idea. In Section \ref{sec_flux_lemmas}, we take a closer look at duality, formalizing the `flux method' of Section \ref{example_section} and proving some technical lemmas needed for Sections \ref{additivity_seciton} and \ref{additivity_section_z}, where we prove the additivity of $V_\mathbb{Q}$ and $V_\mathbb{Z}$ under disjoint union and $d$-simplex connected sum. The remaining sections take a closer look at the integrality gap between $V_\mathbb{Z}$ and $V_\mathbb{Q}$. In Section \ref{sec_ig_1d}, we share an admissable 1-complex exhibiting an integrality gap, which we believe is the simplest case where $V_\mathbb{Z}\neq V_\mathbb{Q}$. In Section \ref{sec_ig_2d}, we collect some results on integrality gaps for triangulations of the sphere for use in the companion paper \cite{doyle2023extending}.

The idea of $V_\mathbb{Z}$ and $V_\mathbb{Q}$ trace back to Gromov, who first defined the \textit{simplicial norm} $g(\alpha)$ of a homology class (of any topological space) in \cite{gromov}, which extends to the notion of a manifold's \textit{simplicial volume} via its fundamental class. $V_\mathbb{Z}$ and $V_\mathbb{Q}$ fit into a simplicial analog of Gromov's singular simplicial theory. An essential difference is that while a manifold's simplical volume in Gromov's sense is a homotopy invariant, $V_\mathbb{Z}$ and $V_\mathbb{Q}$ depends strongly on the simplicial decomposition. Consider, for example, the shell of a tetrahedron versus the shell of an icosahedron.

In \cite{stt}, Sleater, Thurston, and Tarjan, while investigating rotations in binary trees, and an equivalent notion of flip distance, had occasion to consider the fewest number of tetrahedra needed to geometrically decompose certain triangulations of the 2-sphere in 3-space\footnote{Flip distance remains an interesting topic of its own, see, for example, \cite{pournin2021strong}.}. The authors lower-bounded this number via a hyperbolic geometry argument, but noted, at the end of the paper, that it should be possible to follow a combinatorial approach ---- remarking that a simplicial relaxation of the geometric problem, essentially what we consider in this paper, is a linear problem. This thread was carried forward to some extent in \cite{kt}, where they essentially work to construct lower-bounds --- via the `flux method' --- for an infinite family of two-dimensional triangulations using ideas of max-flow.

More recently, in a collaboration \cite{doyle2023extending} with Zili Wang and Peter Doyle extending the results of \cite{stt}, we had the desire to actually compute $V_\mathbb{Z}$ for some triangulations of the sphere. This work provides some results for \cite{doyle2023extending}.

\section{Two examples}\label{example_section}
We begin with some hands-on, and relatively ad-hoc, computations of $V_\mathbb{Z}$ and $V_\mathbb{Q}$.\\

\subsection{Example 1: 6-cycle} Let $\mathcal{K}$ consist of the edges of a regular hexagon in the plane (oriented outward/counterclockwise). For $V_\mathbb{Z}$, we seek a minimum-size collection of 2-simplices $\{T_i\}$ on the vertices of $\mathcal{K}$ such that $\sum \partial T_i =\mathcal{K}$. It seems clear that we can't do better than a standard decomposition of the hexagon into four triangles (say, for example, coning to vertex), but how can we prove it?\\
\begin{figure}[h]
    \centering
    \tikzmath{\s = 3; \es = \s * (sqrt(3)/2); \les = \s / 2; \al =.92; \ah = 1.4; \at = (\al + \ah) / 2 + .05; \ta = 4; \tal = 9;}
\begin{tikzpicture}
    \foreach \x in {0,1,2,3,4,5}
        {
        \draw ({\s * cos(\x * 60)}, {\s * sin(\x * 60)}) -- ({\s * cos((\x + 1) * 60)}, {\s * sin((\x + 1) * 60)});
        
        \draw[->] ({\al * \es * cos((\x + 1/2)* 60)}, {\al * \es * sin((\x + 1/2) * 60)}) -- ({\ah * \es * cos((\x + 1/2)* 60)}, {\ah * \es * sin((\x + 1/2) * 60)});
        
        \node at ({\at * \es * cos((\x + 1/2)* 60 + \ta])}, {\at * \es * sin((\x + 1/2) * 60 + \ta)}) {1};
        
        \draw ({\s * cos(\x * 60)}, {\s * sin(\x * 60])}) -- ({\s * cos((\x + 2) * 60)}, {\s * sin((\x + 2) * 60)});

        \draw[->] ({\al * \les * cos(\x* 60)}, {\al * \les * sin(\x * 60)}) -- ({\ah * \les * cos(\x * 60)}, {\ah * \les * sin(\x * 60)});

        \node at ({\at * \les * cos(\x * 60 + \tal])}, {\at * \les * sin(\x * 60 + \tal)}) {\footnotesize 1/2};
        }
\end{tikzpicture}
    \caption{Fluxes to show at least 4 triangles are needed}
    \label{fig:hexflux}
\end{figure}
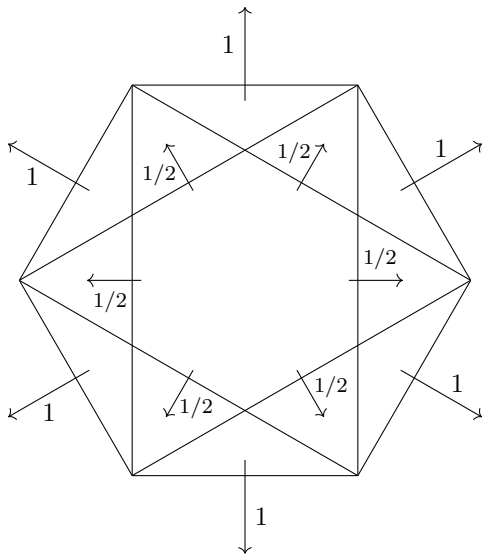
In Figure \ref{fig:hexflux}, `fluxes' have been placed across each edge between two hexagon vertices. The three unlabeled edges have zero flux. One can check that any triangle in a decomposition must have outward flux at most 3/2. On the other hand, the total outward flux in a decomposition must be equal to 6, the flux through the boundary. Thus a decomposition must have at least $6 / (3/2) = 4$ triangles, and so we have $V_\mathbb{Z}(\mathcal{K})\geq 4$. The same argument applies equally well to show $V_\mathbb{Q}(\mathcal{K})\geq 4$, and hence, since we can achieve $4$ with integral coefficients, we have $V_\mathbb{Z}=V_\mathbb{Q}=4$. To see the fluxes don't depend on our picture, note that they are really just weights on possible directed edges, which produce, via the boundary map, weights on 2-chains.

\subsection{Example 2: triangular bipyramid} Let $\mathcal{K}$ consist of the faces of a triangular bipyramid (oriented outward). For $V_\mathbb{Z}$, we seek a minimum-size collection of oriented 3-simplices $\{T_i\}$ on the vertices of $\mathcal{K}$ such that $\sum \partial T_i =\mathcal{K}$. We can clearly achieve 2 tetrahedra by taking the bottom and top pyramids. It's interesting to note, although we already did better, that one can also achieve 3 by taking meridianal wedges. Let's prove at least two are needed by the same flux approach.\\
\begin{figure}[h]
    \centering
    \tikzmath{\cx = 2.8; \cy = 1.6;  \ux = 2.5; \uy = 3.5; \dx = 2.5; \dy = -2.5; \ter = .6; \tix = 1.9; \tiy = 1.4; \der = .7; \dix = 3.1; \diy = -.4;}
\begin{tikzpicture}
    \coordinate (A) at (0,0);
    \coordinate (B) at (4,0);
    \coordinate (C) at (\cx, \cy);
    \draw (A) -- (B);
    \draw [dashed] (A) -- (C);
    \draw [dashed] (B) -- (C);
    \coordinate (U) at (\ux, \uy);
    \draw (A) -- (U);
    \draw (B) -- (U);
    \draw [dashed] (C) -- (U);
    \coordinate (D) at (\dx, \dy);
    \draw (A) -- (D);
    \draw (B) -- (D);
    \draw [dashed] (C) -- (D);
    
    \coordinate (TLI) at ({\ux * \ter}, {\uy * \ter});
    \coordinate(TLD) at (\tix, \tiy);
    \coordinate(TLU) at ({\ux * \ter + (\ux * \ter - \tix)}, {\uy * \ter + (\uy * \ter - \tiy});
    \draw [dashed] (TLI) -- (TLD);
    \draw[->] (TLI) -- (TLU);
    \node at ({\ux * \ter + (\ux * \ter - \tix) + .3}, {\uy * \ter + (\uy * \ter - \tiy) + .1}) {1};
    \draw [draw=none, fill=green, fill opacity=0.2] (0,0) -- (\cx,\cy) -- (\ux,\uy) -- cycle;
    \coordinate (TLI) at ({\ux * \ter}, {\uy * \ter});
    \coordinate(TLD) at (\tix, \tiy);
    \coordinate(TLU) at ({\ux * \ter + (\ux * \ter - \tix)}, {\uy * \ter + (\uy * \ter - \tiy});
    \draw [dashed] (TLI) -- (TLD);
    \draw[->] (TLI) -- (TLU);
    \draw[fill = black] (\tix,\tiy) circle (.3ex);
    \node at ({\ux * \ter + (\ux * \ter - \tix) + .3}, {\uy * \ter + (\uy * \ter - \tiy) + .1}) {1};
    \draw [draw=none, fill=green, fill opacity=0.2] (4,0) -- (\cx,\cy) -- (\dx,\dy) -- cycle;
    \coordinate (DLI) at ({\dx + \der * (4-\dx)}, {\dy + \der * (-\dy)});
    \coordinate(DLD) at (\dix, \diy);
    \coordinate(DLU) at ({\dx + \der * (4-\dx) + 1.2*(\dx + \der * (4-\dx) - \dix)}, {\dy + \der * (-\dy) + 1.2*(\dy + \der * (-\dy) - \diy});
    \draw [dashed] (DLI) -- (DLD);
    \draw[->] (DLI) -- (DLU);
    \draw[fill = black] (\dix,\diy) circle (.3ex);
    \node at ({\dx + \der * (4-\dx) + 1.2*(\dx + \der * (4-\dx) - \dix)-.1}, {\dy + \der * (-\dy) + 1.2*(\dy + \der * (-\dy) - \diy - .2}) {1};
\end{tikzpicture}
    \caption{Fluxes to show at least 2 tetrahedra are needed}
    \label{fig:bipyrflux}
\end{figure}
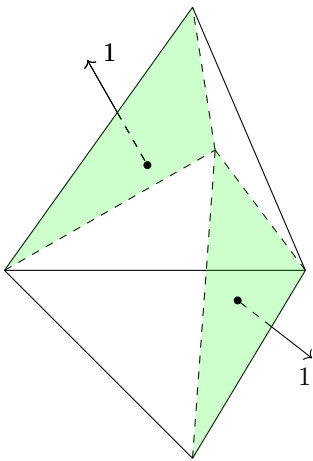
In the above diagram, we have assigned flux 1 through the two faces as shown, and no flux through any of the 8 other triangles one can form between the 5 vertices of the bipyramid. It's not hard to see that any 3-simplex can have outward flux at most 1. The whole shape, on the other hand, has outward flux 2, and so any decomposition must have at least $2/1=2$ tetrahedra, integral or fractional. Thus $V_\mathbb{Z}(\mathcal{K})=V_\mathbb{Q}(\mathcal{K})=2$.

\section{$V_\mathbb{Z}, V_\mathbb{Q}$ via integer programming, linear programming}\label{lp_ip_section}

Finding $V_\mathbb{Z}(\mathcal{K})$ can be cast as an integer linear programming problem. Each possible oriented $(d+1)$-simplex contributes its boundary, and we are looking for an integer linear combination equal to $\mathcal{K}$ where the number of $(d+1)$-simplices is minimized.

On the computational level, we take two primal variables for each $(d+2)$-tuple of vertices of $\mathcal{K}$ (one for each orientation), and we restrict these primal variables to non-negative integers. We have two dual variables for each $(d+1)$-tuple of vertices (one for each orientation), and the corresponding constraints are that for each oriented $d+1$-tuple we must have as many on the boundary of $\mathcal{K}$ as are accounted for by the choices of primal variables. We want to minimize the sum of the primal variables (i.e. the number of $(d+1)$-simplices chosen). See the next section for a fully worked-out example with the bipyramid. From the above discussion, we see that the linear program will have $2\binom{n}{d+2}$ variables and $2\binom{n}{d+1}$ constraints, where $n$ is the number of vertices in $\mathcal{K}$. Such an integer program can be solved by standard software libraries, for example SciPy's\cite{2020SciPy-NMeth}\texttt{optimize.linprog}, which supports linear programming over both $\mathbb{Z}$ (wrapping the HiGHS' `Branch and Price' \cite{highs}) and $\mathbb{Q}$. Solving over $\mathbb{Z}$ yields $V_\mathbb{Z}$ and solving over $\mathbb{Q}$ yields $V_\mathbb{Q}$.

Since the linear program for $V_\mathbb{Q}$ is a relaxation of the integer program from $V_\mathbb{Z}$, we must have $V_\mathbb{Q}(\mathcal{K})\leq V_\mathbb{Z}(\mathcal{K})$, and, if this inequality is strict, our decomposition of $\mathcal{K}$ can improve through the use of `fractional simplices'. Note that this really does occur, see Sections \ref{sec_ig_1d} and \ref{sec_ig_2d} for examples.

It's interesting to note that the dual of the linear program for $V_\mathbb{Q}(\mathcal{K})$ amounts to finding an optimal choice of `fluxes' like we did in Section \ref{example_section} --- where by optimal we mean the best implied lower bound on $V_\mathbb{Z}$ and $V_\mathbb{Q}$. By linear programming duality, this flux method is able to obtain the lower bound of  $V_\mathbb{Q}$ but not better. This duality is explicitly presented in Section \ref{explicit_lp_ip_section}, and we collect some theoretical results in Section \ref{sec_flux_lemmas}.

\section{Explicit example of the programs for $V_\mathbb{Q}$ and $V_\mathbb{Z}$}\label{explicit_lp_ip_section}
Consider the bipyramid example from Section \ref{example_section}, where we have now labeled the vertices in Figure \ref{fig:bipyrlab}.
\begin{figure}[h]
    \centering
    \tikzmath{\cx = 2.8; \cy = 1.6;  \ux = 2.5; \uy = 3.5; \dx = 2.5; \dy = -2.5; \ter = .6; \tix = 1.9; \tiy = 1.4; \der = .7; \dix = 3.1; \diy = -.4;}
\begin{tikzpicture}
    \coordinate (A) at (0,0);
    \coordinate (B) at (4,0);
    \coordinate (C) at (\cx, \cy);
    \draw (A) -- (B);
    \draw [dashed] (A) -- (C);
    \draw [dashed] (B) -- (C);
    \coordinate (U) at (\ux, \uy);
    \draw (A) -- (U);
    \draw (B) -- (U);
    \draw [dashed] (C) -- (U);
    \coordinate (D) at (\dx, \dy);
    \draw (A) -- (D);
    \draw (B) -- (D);
    \draw [dashed] (C) -- (D);

    \node at (0 - .3,0) {B};
    \node at (4 + .3,0) {D};
    \node at (\ux, \uy + .3) {A};
    \node at (\dx, \dy - .3) {C};
    \node at (\cx -.3, \cy + .1) {E};
\end{tikzpicture}
    \caption{Labelled bipyramid}
    \label{fig:bipyrlab}
\end{figure}
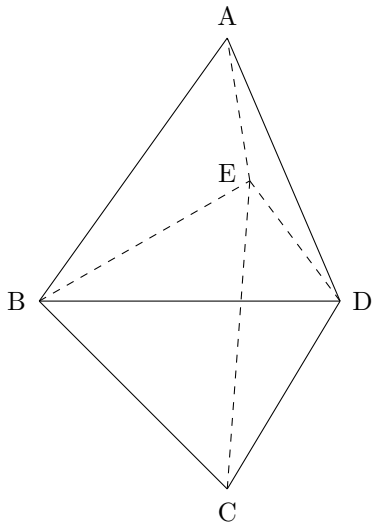
The associated simplicial complex $\mathcal{K}$ has facets  $ABD, AEB, ADE, BCD, CED, CBE$, and the associated linear program for $V_\mathbb{Q}(\mathcal{K})$ may be expressed in tableau form\footnote{See, for example, \cite{vas} p. 77.} as follows:\\
\begin{tabular}{c c c c c c c c c c c || c}
DEBA & DEAB & DBEC & DBCE & DBAC & DBCA & EDAC & EDCA & BEAC & BECA & 1 & \\
\hline
\hline
1 & -1 & & & 1& -1& & & & & -1& ABD\\
\hline
-1 & 1& & & -1& 1& & & & & 1& ADB\\
\hline
1 & -1 & & & & & 1& -1& & & -1& ADE\\
\hline
-1 & 1& & & & & -1& 1& & & 1& AED\\
\hline
1 & -1& & & & & & & 1& -1& -1& AEB\\
\hline
-1 & 1& & & & & & & -1& 1& 1& ABE\\
\hline
& & 1& -1& 1& -1& & & & & -1& BCD\\
\hline
& & -1& 1& -1& 1& & & & & 1& ADC\\
\hline
& & 1& -1& & & 1& -1& & & -1& CED\\
\hline
& & -1& 1& & & -1& 1& & & 1& CDE\\
\hline
& & 1& -1& & & & & 1& -1& -1& ECB\\
\hline
& & -1& 1& & & & & -1& 1& 1& EBC\\
\hline
1 & -1& -1& 1& & & & & & & & BED\\
\hline
-1 & 1& 1& -1& & & & & & & & BDE\\
\hline
& & & & -1& 1& & & 1& -1& & ABC\\
\hline
& & & & 1& -1& & & -1& 1& & ACB\\
\hline
& & & & 1& -1& -1& 1& & & & ADC\\
\hline
& & & & -1& 1& 1& -1& & & & ACD\\
\hline
& & & & & & 1& -1& -1& 1& & AEC\\
\hline
& & & & & & -1& 1& 1& -1& & ACE\\
\hline
1 & 1 & 1 & 1 & 1& 1 & 1 & 1 & 1 & 1 & & $\rightarrow\text{min}$\\
\end{tabular}
In this tableau, the first two rows, for example, enforce the constraints that 
\begin{align*}
    \#DEBA - \#DEAB + \#DBAC - \#DBCA -1 &\geq 0\\
    -\#DEBA + \#DEAB - \#DBAC + \#DBCA +1 &\geq 0
\end{align*}
The first enforces \textit{at least 1} $ABD$ in the boundary of our decomposition. The second enforces at least -1 $ADB$ in the boundary of decomposition, which is equivalent to there being \textit{at most 1} $ABD$ in the boundary of our decomposition. Together they enforce exactly 1 $ABD$ in our boundary. So all the rows but the last constrain our decomposition to have the correct boundary, and the last expresses that we want as few tetrahedra as possible.
The optimal value of this program over the integers is $V_\mathbb{Z}$ and the optimal value over rationals is $V_\mathbb{Q}$, in this case both are equal to $2$. Note that the size of the program is $\approx2\binom{n}{4}\times 2\binom{n}{3}$, where $n$ is the number of vertices.

It is instructive to look at the dual linear program, which we may obtain by a simple transposition of the tableau with sign changes (\cite{vas} p. 137):\\

\begin{tabular}{c |c |c |c |c |c |c| c |c |c |c |c |c |c |c |c |c |c |c |c |c|| c}
\rot{ABD$^*$}& \rot{ADB$^*$} & \rot{ADE$^*$} & \rot{AED$^*$} & \rot{AEB$^*$}& \rot{ABE$^*$}& \rot{BCD$^*$}& \rot{ADC$^*$}& \rot{CED$^*$}& \rot{CDE$^*$}& \rot{ECB$^*$}& \rot{EBC$^*$}& \rot{BED$^*$}& \rot{BDE$^*$}& \rot{ABC$^*$}& \rot{ACB$^*$}& \rot{ADC$^*$}& \rot{ACD$^*$}& \rot{AEC$^*$}& \rot{ACE$^*$}&1 &\\
\hline\hline
-1 &1 &-1 &1 &-1 &1 & & & & & & &-1 &1 & & & & & & &1 & DEBA$^*$\\
1 & -1& 1 & -1& 1& -1& & & & & & & 1& -1& & & & & & & 1 & DEAB$^*$\\
 & & & & & & 1& 1& -1& 1& -1& 1& 1& -1& & & & & & & 1 &DBEC$^*$\\
 & & & & & & 1& -1& 1& -1& 1& -1& -1& 1& & & & & & & 1 &  DBCE$^*$ \\
 -1& 1& & & & & -1& 1& & & & & & & 1& -1& -1& 1& & & 1 &  DBAC$^*$ \\
 1& -1& & & & & 1& -1& & & & & & & -1& 1& 1& -1& & & 1 &  DBCA$^*$ \\
 & & -1& 1& & & & & -1& 1& & & & & & & 1& -1& -1& 1& 1 &  EDAC$^*$ \\
 & & 1& -1& & & & & 1& -1& & & & & & & -1& 1& 1& -1& 1 &  EDCA$^*$ \\
 & & & & -1& 1& & & & & -1& 1& & & -1& 1& & & 1& -1& 1 &  BEAC$^*$ \\
 & & & & 1& -1& & & & & 1& -1& & & 1& -1& & & -1& 1& 1 &  BECA$^*$ \\
 1 & -1& 1& -1& 1& -1& 1& -1& 1& -1& 1& -1& 1& & & & & & & & & 
 $\rightarrow\max$\\
\end{tabular}
How can we interpret this dual program? We are assigning a flux through each triangle in such a way that (from the constraints) the flux out of each possible tetrahedron is at most 1, and we are doing this to maximize the total flux out out of our bipyramid --- asking, in other words, for the best lower bound from the flux technique as in Section $\ref{example_section}$. By linear programming duality the maximum value of this dual program (i.e. the best flux lower bound) is exactly equal to $V_\mathbb{Q}=2$.

\section{Duality and Flux Functions}\label{sec_flux_lemmas}
We collect a few lemmas relating to duality in this section, mainly for use in Section \ref{additivity_seciton}.

\begin{lem}[Flux Lower Bound]\label{flux_lb_lemma}
    Let $\mathcal{K}$ be an admissible $d$-complex, and let $S_d$ be the set of oriented $d$-simplices on the vertices of $\mathcal{K}$. Suppose we have a real-valued `flux function' $\phi$ on the $S_k$ such that:\\
    \begin{itemize}
        \item For any $T\in S_d$, with $T'$ its oppositely oriented version, 
        $$\phi(T)=-\phi(T').$$
        \item For any $T$ in $S_{d+1}$, with boundary $B_1,...,B_{d+2}$, $$\phi(B_1)+...+\phi(B_{d+2})\leq 1.$$
        i.e. every candidate $(d+1)$-simplex has outward flux at most 1.
    \end{itemize}
    Let $F_1, ..., F_n$ be the oriented facets of $\mathcal{K}$. Then 
        $$\phi(F_1)+...+\phi(F_n)\leq V_\mathbb{Q}(\mathcal{K}).$$
\end{lem}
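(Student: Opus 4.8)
The plan is to recognize this as the weak-duality half of the LP/dual-LP pair described in the previous section, and to prove it directly by computing the total flux $\phi(\mathcal{K})$ in two ways. Concretely, let $\alpha \in C_{d+1}(\mathcal{K};\mathbb{Q})$ be any chain with $\partial\alpha = \mathcal{K}$; I will show $\phi(F_1)+\cdots+\phi(F_n) \le |\alpha|_1$, and then take the infimum over all such feasible $\alpha$ to replace $|\alpha|_1$ by $V_\mathbb{Q}(\mathcal{K})$. Writing $\alpha = \sum_i c_i T_i$ as a combination of distinct oriented $(d+1)$-simplices with $|\alpha|_1 = \sum_i |c_i|$, I first reorient: whenever $c_i < 0$ I replace $T_i$ by its opposite $T_i'$ and $c_i$ by $-c_i$, using that $\partial T_i' = -\partial T_i$ so the chain $\alpha$ is unchanged. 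This makes every coefficient nonnegative while leaving $|\alpha|_1 = \sum_i c_i$ intact, and crucially keeps each $T_i$ a genuine element of $S_{d+1}$ to which the second hypothesis applies.

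Next I extend $\phi$ to a linear functional on $C_d(\mathcal{K};\mathbb{Q})$ by $\phi(\sum_k b_k B_k) = \sum_k b_k \phi(B_k)$; the antisymmetry hypothesis $\phi(T) = -\phi(T')$ is exactly what is needed for this to be well defined on the chain group, where reversing a simplex's orientation negates it as a chain. With $\phi$ linear, I compute $\phi(\mathcal{K})$ two ways. On one hand, $\mathcal{K} = \sum_j F_j$ gives $\phi(\mathcal{K}) = \sum_j \phi(F_j)$, the quantity to be bounded. On the other hand, $\mathcal{K} = \partial\alpha = \sum_i c_i\,\partial T_i$ gives $\phi(\mathcal{K}) = \sum_i c_i\,\phi(\partial T_i)$, where each $\phi(\partial T_i)$ is precisely the outward flux $\phi(B_1)+\cdots+\phi(B_{d+2})$ of $T_i$.

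Finally, the second hypothesis says $\phi(\partial T_i) \le 1$, and since $c_i \ge 0$ I may multiply through to get $c_i\,\phi(\partial T_i) \le c_i$, whence $\phi(\mathcal{K}) = \sum_i c_i\,\phi(\partial T_i) \le \sum_i c_i = |\alpha|_1$. Comparing with the first computation yields $\sum_j \phi(F_j) \le |\alpha|_1$ for every feasible $\alpha$, and specializing to the minimizing $\alpha$ gives the claim. I expect the only real subtlety to be the sign bookkeeping in the reorientation step: the bound $\phi(\partial T_i)\le 1$ points in the useful direction only when paired with a nonnegative coefficient, so it is essential to absorb signs into orientations \emph{before} invoking the hypothesis rather than after. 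Everything else is the linearity of $\phi$ together with the two defining identities $\mathcal{K}=\sum_j F_j$ and $\mathcal{K}=\partial\alpha$.
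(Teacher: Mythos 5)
Your proof is correct, but it takes a genuinely different (more self-contained) route than the paper, which disposes of this lemma in one line: ``Immediate from LP duality.'' That is, the paper views a flux function as a feasible point of the dual linear program (the one written out in tableau form in Section~\ref{explicit_lp_ip_section}) and a decomposing chain as a feasible point of the primal, and then quotes weak duality as a black box. What you have done is prove that weak-duality inequality directly in this setting: reorient so that every coefficient of $\alpha$ is nonnegative, use antisymmetry to extend $\phi$ to a linear functional on $C_d(\mathcal{K};\mathbb{Q})$, and evaluate $\phi(\mathcal{K})=\phi(\partial\alpha)=\sum_i c_i\,\phi(\partial T_i)\leq \sum_i c_i=|\alpha|_1$. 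The two arguments are the same mathematics, but yours buys several things: it requires no LP machinery or tableau setup at all; it makes visible that only the easy direction of duality is needed here, with strong duality reserved for Lemma~\ref{flux_lemma} (existence of an \emph{optimal} flux function, where attainment of the dual optimum genuinely matters); and it is indifferent to whether the primal minimum is attained, since you bound $|\alpha|_1$ for every feasible $\alpha$ and then pass to the infimum. The subtlety you flagged --- absorbing signs into orientations \emph{before} invoking $\phi(\partial T_i)\leq 1$ --- is exactly the right point of care, since that hypothesis is one-sided and pairs usefully only with nonnegative coefficients; and your observation that the antisymmetry hypothesis is precisely what makes $\phi$ descend to the chain group (where $T'=-T$) is the other load-bearing step, correctly identified.
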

\begin{proof}
Immediate from LP duality.
\end{proof}
\begin{lem}[Optimal flux function]\label{flux_lemma}
    Let $\mathcal{K}$ be an admissible $d$-complex, and let $S_d$ be the set of oriented $d$-simplices on the vertices of $\mathcal{K}$. Then there exists a real-valued `flux function' $\phi$ on $S_d$ such that:\\
    \begin{itemize}
        \item For any $T\in S_d$, with $T'$ its oppositely oriented version, 
        $$\phi(T)=-\phi(T').$$
        \item For any $T$ in $S_{d+1}$, with boundary $B_1,...,B_{d+2}$, $$\phi(B_1)+...+\phi(B_{d+2})\leq 1.$$
        i.e. every candidate $(d+1)$-simplex has outward flux at most 1.
        \item Let $F_1, ..., F_n$ be the oriented facets of $\mathcal{K}$. Then 
        $$\phi(F_1)+...+\phi(F_n)=V_\mathbb{Q}(\mathcal{K}).$$
    \end{itemize}
    We will call such a $\phi$ an \underline{optimal flux function for $\mathcal{K}$}.
\end{lem}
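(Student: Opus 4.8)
The plan is to realize the claimed flux function as an optimal solution to the dual linear program whose primal computes $V_\mathbb{Q}(\mathcal{K})$, and then simply check that the three bulleted conditions are precisely the feasibility and complementary-optimality conditions of that dual. Concretely, I would first set up the primal LP explicitly: nonnegative real variables $x_T$ indexed by oriented $(d+1)$-simplices $T$ on the vertices of $\mathcal{K}$, minimizing $\sum_T x_T$ subject to the constraint that $\sum_T x_T \,\partial T = \mathcal{K}$ as a $d$-chain. As in the worked example of Section \ref{explicit_lp_ip_section}, one encodes each equality constraint on an oriented $d$-simplex as a pair of inequalities (one per orientation), so the dual variables come in oppositely-signed pairs. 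The dual program then assigns a real number $\phi(T)$ to each oriented $d$-simplex, and the first bullet (the antisymmetry $\phi(T)=-\phi(T')$) is exactly the bookkeeping identity that collapses the two sign-paired dual variables into a single signed flux value on each oriented $d$-simplex.

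Next I would verify that the second bullet is the dual feasibility constraint. Each primal variable $x_T$ for an oriented $(d+1)$-simplex $T$ contributes the coefficient column $\partial T = B_1 + \cdots + B_{d+2}$ (the oriented facets of $T$), and the dual constraint associated to that primal variable reads $\sum_i \phi(B_i) \le 1$, since the primal objective coefficient of every $x_T$ is $1$. This is precisely the statement that every candidate $(d+1)$-simplex has outward flux at most $1$. The third bullet, $\sum_j \phi(F_j) = V_\mathbb{Q}(\mathcal{K})$, is the assertion that the dual objective value equals the primal optimum, which is guaranteed by strong LP duality once we know an optimal dual solution exists.

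The one genuine thing to establish, therefore, is existence of an optimal dual solution attaining the value $V_\mathbb{Q}(\mathcal{K})$ — in other words, that strong duality applies and that the optimum is actually achieved rather than merely approached. The cleanest route is to invoke the strong duality theorem for finite-dimensional linear programs: since $\mathcal{K}$ is admissible, the primal is feasible (a decomposition into $(d+1)$-simplices exists, e.g.\ by coning, giving a finite integral and hence rational feasible point), and the objective $\sum_T x_T$ is bounded below by $0$, so the primal optimum $V_\mathbb{Q}(\mathcal{K})$ is finite and attained. Strong duality then yields a dual optimal solution of equal value, which is achieved because both programs are over finitely many variables with a finite optimum. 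I would then read off $\phi$ from this dual optimizer via the antisymmetric pairing described above and confirm the three bullets hold by construction.

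I expect the main (and really the only) obstacle to be purely presentational rather than mathematical: one must set up the correspondence between oriented $d$-simplices and dual variable pairs carefully enough that the sign conventions in $\partial T = \sum_i B_i$ match the orientation conventions built into the flux antisymmetry, so that the dual constraint genuinely reads $\sum_i \phi(B_i) \le 1$ with the correct signs. This is exactly the structure already exhibited concretely in the dual tableau of Section \ref{explicit_lp_ip_section}, so I would lean on that example as the template and verify the general pattern matches. Once the sign bookkeeping is pinned down, the proof is essentially a one-line appeal to strong LP duality, mirroring the ``Immediate from LP duality'' proof of Lemma \ref{flux_lb_lemma}.
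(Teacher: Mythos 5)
Your proposal is correct and takes essentially the same route as the paper, whose entire proof of this lemma is the single line ``Immediate from LP duality.'' What you have written out --- the primal LP for $V_\mathbb{Q}$, the sign-paired dual variables collapsing to an antisymmetric flux, primal feasibility via coning, and the appeal to strong duality to get an attained dual optimum --- is exactly the argument that one-liner compresses.
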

\begin{proof}
Immediate from LP duality.
\end{proof}
We will make also make use of the following strengthened form of Lemma \ref{flux_lemma}.
\begin{lem}[Optimal flux function centered at $w$]\label{flux_lemma_strong}
    Let $\mathcal{K}$ be an admissible $d$-complex, and let $S_d$ be the set of oriented $d$-simplices on the vertices of $\mathcal{K}$. Fix a vertex $v$ of $\mathcal{K}$. Then there exists a real-valued `flux function' $\phi$ on $S_d$ such that:\\
    \begin{itemize}
        \item For any $T\in S_d$, with $T'$ its oppositely oriented version, 
        $$\phi(T)=-\phi(T').$$
        \item For any $T$ in $S_{d+1}$, with boundary $B_1,...,B_{d+2}$, $$\phi(B_1)+...+\phi(B_{d+2})\leq 1.$$
        i.e. every candidate $(d+1)$-simplex has outward flux at most 1.
        \item Let $F_1, ..., F_n$ be the oriented facets of $\mathcal{K}$. Then 
        $$\phi(F_1)+...+\phi(F_n)=V_\mathbb{Q}(\mathcal{K}).$$
        \item $\phi(T)\in [-1,1]$ for all $T$ in $S_d$.
        \item $\phi(T) = 0$ for any $T\in S_d$ with $w$ as vertex. 
    \end{itemize}
    We will call such a $\phi$ an \underline{optimal flux function for $\mathcal{K}$ centered at $w$}.
\end{lem}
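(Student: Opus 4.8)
The plan is to obtain the strengthened flux function by modifying an arbitrary optimal flux function from Lemma \ref{flux_lemma}. The two new requirements are a boundedness condition, $\phi(T)\in[-1,1]$, and a centering condition, $\phi(T)=0$ whenever $w$ is a vertex of $T$. The key observation is that a flux function is only defined up to the addition of a ``coboundary'': given any function $g$ on oriented $(d-1)$-simplices satisfying $g(U)=-g(U')$, we may add to $\phi$ the function $T\mapsto \sum_{i} g(\partial_i T)$, i.e. the sum of $g$ over the oriented boundary faces of $T$. Because each $(d+1)$-simplex $T$ has faces whose own boundaries cancel in pairs (every $(d-1)$-face of $T$ is shared by exactly two $d$-faces with opposite induced orientations), such a modification leaves the outward flux $\phi(B_1)+\cdots+\phi(B_{d+2})$ of every $(d+1)$-simplex unchanged, and similarly leaves $\sum_i \phi(F_i)$ unchanged since $\mathcal{K}$ is boundaryless ($\sum\partial F_i=0$). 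Thus any such modification preserves all three conditions already guaranteed by Lemma \ref{flux_lemma}, giving us a gauge freedom to exploit.

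First I would enforce the centering condition. Define $g$ on oriented $(d-1)$-simplices $U$ by declaring $g(U)$ to be a suitable multiple of the current flux through the unique $d$-simplex $w\ast U$ spanned by $w$ and $U$ (oriented consistently), and $0$ on $(d-1)$-simplices already containing $w$; extend by $g(U')=-g(U)$. Adding the induced coboundary to $\phi$ should be arranged so that for any $d$-simplex $T$ containing $w$, the coboundary term exactly cancels $\phi(T)$. Concretely, a $d$-simplex $T=w\ast U$ through $w$ has boundary faces consisting of $U$ together with $(d-1)$-faces each containing $w$; choosing $g$ on $U$ to absorb $\phi(w\ast U)$ drives the modified flux on every $w$-containing $d$-simplex to zero. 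This is the standard ``coning'' or gauge-fixing trick, and verifying the cancellation is a bookkeeping exercise with signs.

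Next I would enforce boundedness. Once centering holds, I claim $\phi$ is automatically bounded in $[-1,1]$: for any oriented $d$-simplex $T$ not containing $w$, consider the two $(d+1)$-simplices $w\ast T$ and $w\ast T'$ obtained by coning $T$ and its reverse to $w$. Their outward-flux constraints each read as an inequality of the form $\pm\phi(T) + (\text{sum of fluxes through } w\text{-faces}) \leq 1$, and after centering all the $w$-face terms vanish, leaving $\phi(T)\leq 1$ from one cone and $-\phi(T)\leq 1$ from the other. Hence $|\phi(T)|\leq 1$ for every $T$, and the $w$-containing simplices are zero by construction, so boundedness holds on all of $S_d$.

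The main obstacle I expect is the sign bookkeeping in the coning step: making precise the induced orientation on $w\ast U$ and checking that the coboundary modification $T\mapsto\sum_i g(\partial_i T)$ both annihilates the $w$-containing fluxes and leaves the $(d+1)$-simplex constraints and the facet sum genuinely invariant. The invariance of the facet sum relies essentially on admissibility ($\sum\partial F_i=0$), and the invariance of the per-simplex constraints relies on the face-pairing identity $\partial\partial=0$ at the level of the coboundary; both are conceptually routine but must be set up with care so that no constraint is accidentally loosened. Once the gauge freedom and the two-cone argument are in place, the remaining verifications are mechanical.
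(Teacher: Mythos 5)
Your proposal is correct and is essentially the paper's own argument in different packaging: adding the coboundary of $g(U)=-\phi_0(w\ast U)$ to $\phi_0$ produces exactly the function $\phi(T)=\phi_0\bigl(\partial(w\wedge T)\bigr)$ that the paper constructs by ``integrating'' $d\phi_0$ with respect to $w$, since $\partial(w\wedge T)=T-w\wedge\partial T$. Both arguments then rest on the same three facts --- this cone identity, $\partial\partial=0$ (preserving the per-simplex constraints), and admissibility $\sum\partial F_i=0$ (preserving the facet sum) --- so the proofs coincide up to presentation.
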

\begin{proof}
Let $\phi_0$ be an optimal flux function for $\mathcal{K}$, as guaranteed by Lemma \ref{flux_lemma}. Then we have a function $d\phi_0$ on $S_{d+1}$, induced by $\phi$ via the boundary operator. We obtain our $\phi$ by `integrating' $d\phi_0$ with respect to $v$. In detail, for $T\in S_k$, define $\phi(T)=d\phi_0(\{w\}\wedge T)$, where by $\{w\}\wedge T$ we mean, if $T=[v_0,...,v_{d+1}]$, the oriented simplex $[w,v_0,...,v_{d+1}]$. If $w$ in $T$ already, we take $\phi(T)=0$.\\
It remains to verify that $\phi$ has the required properties, which we check in the order they are listed:
\begin{itemize}
    \item If $w\in T$ then $\phi(T)=-\phi(T')=0$. Otherwise, we have $$\phi(T)=d\phi_0(\{w\}\wedge T)=\sum \phi_0(\partial_i(\{w\}\wedge T))=\sum-\phi_0(\partial_i(\{w\}\wedge T'))=-\phi(T').$$
    \item For ease of notation, we extend $\phi, \phi_0, d\phi_0,\wedge$ to chains by linearity for the remainder of the proof. 
    $$\phi(B_1)+...+\phi(B_{d+2})=\phi( \partial T )= d\phi_0(\{w\}\wedge\partial T)=\phi_0(\partial(\{w\}\wedge\partial T))=\phi_0(\partial T - \{w\}\wedge\partial\partial T)= \phi_0(B_1)+...+\phi_0(B_{d+2})\leq 1.$$
    \item Reasoning as in the previous step yields 
    $$\phi(F_1)+...+\phi(F_n)=\phi(\partial K)=...=\phi_0(F_1)+...+\phi_0(F_n)=V_\mathbb{Q}(\mathcal{K}).$$
    \item $\phi(T)=d\phi_0(\{w\}\wedge T)=\phi_0(\partial (\{w\}\wedge T))$. By properties of $\phi_0$ this is $\leq 1$, and, since the same holds for $T$'s orientation reversal $T'$, $\phi(T)=-\phi(T')$ implies that $\phi(T)\geq -1$ as well.
    \item This holds by construction.
\end{itemize}
\end{proof}

\section{Additivity of $V_\mathbb{Q}$}\label{additivity_seciton}
We establish some results on the additivity of $V_\mathbb{Q}$ for admissible complexes of the same dimension.
\begin{thm}\label{thm: disjoint union}
    Let $\mathcal{K}$, $\mathcal{K}'$ be admissible $d$-complexes, and let $\mathcal{K}\sqcup\mathcal{K}'$ be their disjoint union. Then $$V_\mathbb{Q}(\mathcal{K}\sqcup\mathcal{K}') = V_\mathbb{Q}(\mathcal{K})+V_\mathbb{Q}(\mathcal{K}').$$
\end{thm}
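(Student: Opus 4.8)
The plan is to prove the two inequalities separately. The easy direction is $V_\mathbb{Q}(\mathcal{K}\sqcup\mathcal{K}') \leq V_\mathbb{Q}(\mathcal{K}) + V_\mathbb{Q}(\mathcal{K}')$. Here I would take optimal fractional decompositions $\alpha$ of $\mathcal{K}$ and $\alpha'$ of $\mathcal{K}'$, achieving $\partial\alpha = \mathcal{K}$ and $\partial\alpha' = \mathcal{K}'$ with $|\alpha|_1 = V_\mathbb{Q}(\mathcal{K})$ and $|\alpha'|_1 = V_\mathbb{Q}(\mathcal{K}')$. Since the vertex sets are disjoint, $\alpha + \alpha'$ is a chain on the vertices of $\mathcal{K}\sqcup\mathcal{K}'$ with $\partial(\alpha + \alpha') = \mathcal{K}\sqcup\mathcal{K}'$, and because the supports of $\alpha$ and $\alpha'$ involve disjoint vertex sets there is no cancellation, so $|\alpha + \alpha'|_1 = |\alpha|_1 + |\alpha'|_1$. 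This gives the upper bound immediately; the only subtlety is confirming that no $(d+1)$-simplex can span both components, which is clear since each simplex is determined by $d+2$ vertices all lying in one component or the other (a simplex mixing vertices from both components could in principle appear, but such a simplex contributes faces to both components on its boundary and cannot help, as the optimal solution need not use it — I would note that we are free to restrict to chains supported on single-component simplices).

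The hard direction is the lower bound $V_\mathbb{Q}(\mathcal{K}\sqcup\mathcal{K}') \geq V_\mathbb{Q}(\mathcal{K}) + V_\mathbb{Q}(\mathcal{K}')$, and this is where I would lean on the flux machinery from Section \ref{sec_flux_lemmas}. The idea is to build a single flux function on $\mathcal{K}\sqcup\mathcal{K}'$ whose total flux through the facets equals $V_\mathbb{Q}(\mathcal{K}) + V_\mathbb{Q}(\mathcal{K}')$; then Lemma \ref{flux_lb_lemma} delivers the bound. To do this, I would take optimal flux functions $\phi$ for $\mathcal{K}$ and $\phi'$ for $\mathcal{K}'$ from Lemma \ref{flux_lemma}, and combine them into a flux function $\psi$ on the $d$-simplices of the disjoint union. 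The natural definition is $\psi(T) = \phi(T)$ when all vertices of $T$ lie in $\mathcal{K}$, $\psi(T) = \phi'(T)$ when all vertices lie in $\mathcal{K}'$, and $\psi(T) = 0$ for any $T$ with vertices in both components.

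The main obstacle is verifying the per-simplex constraint $\psi(B_1) + \cdots + \psi(B_{d+2}) \leq 1$ for \emph{every} candidate $(d+1)$-simplex $T$ on the combined vertex set, including the \emph{mixed} simplices that straddle both components. For a simplex entirely within one component this is inherited from $\phi$ or $\phi'$. For a mixed simplex, its boundary faces split into faces lying entirely in one component (on which $\psi$ agrees with $\phi$ or $\phi'$) and mixed faces (on which $\psi = 0$); I must check the surviving sum is still at most $1$. This is the delicate step, and I expect it is exactly why the \emph{centered} version, Lemma \ref{flux_lemma_strong}, was proved: by choosing $\phi$ to vanish on all $d$-simplices through a fixed vertex $w$, and likewise $\phi'$ through $w'$, one can arrange that mixed simplices have at most one non-vanishing boundary face with value in $[-1,1]$, forcing the constraint. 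Concretely, I would pick the centered flux functions and argue that any mixed $(d+1)$-simplex has at most one facet lying wholly inside a component, and that facet's $\psi$-value is at most $1$ by the $[-1,1]$ bound; all other facets are mixed and contribute $0$. Finally, since $\psi$ restricted to each component reproduces $\phi$ and $\phi'$ exactly, the total facet flux is $\sum_i \phi(F_i) + \sum_j \phi'(F_j') = V_\mathbb{Q}(\mathcal{K}) + V_\mathbb{Q}(\mathcal{K}')$, and Lemma \ref{flux_lb_lemma} closes the argument.
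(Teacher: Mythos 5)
Your proposal is correct and takes essentially the same route as the paper's own proof: the upper bound by combining optimal decompositions, and the lower bound by gluing the optimal flux functions of Lemma \ref{flux_lemma_strong} into a single flux function that vanishes on mixed simplices and then invoking Lemma \ref{flux_lb_lemma}. One small clarification: the fact that a mixed $(d+1)$-simplex has at most one facet lying wholly in one component is purely combinatorial (a facet omits only one vertex) and does not come from the centering; what Lemma \ref{flux_lemma_strong} really supplies here, exactly as in the paper, is the bound $\phi(T)\in[-1,1]$ on that single surviving facet.
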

\begin{proof}
Clearly $V_\mathbb{Q}(\mathcal{K}\sqcup\mathcal{K}') \leq V_\mathbb{Q}(\mathcal{K})+V_\mathbb{Q}(\mathcal{K}')$, since we can combine optimal decompositions of each part. For the lower bound, we proceed via Lemma 1. Let $w,w'$ be arbitrary vertices of $\mathcal{K}$, $\mathcal{K'}$, and let $\phi$, $\phi'$ be optimal flux functions centered at $w,w'$ as guaranteed by Lemma \ref{flux_lemma_strong}. We define a flux function $\phi^*$ for $\mathcal{K}\sqcup\mathcal{K}$ as follows. For $T$ an oriented $(d+1)$-simplex on the vertices of $\mathcal{K}\sqcup\mathcal{K}$, we define\\
$$\phi^*(T)=\begin{cases}
    \phi(T) & \text{if the vertices of $T$ are all in $\mathcal{K}$,}\\
    \phi'(T) & \text{if the vertices of $T$ are all in $\mathcal{K}$',}\\
     0 & \text{else.}\\
\end{cases}$$
Next we verify the conditions of Lemma \ref{flux_lb_lemma}. The first condition on $T$ and its oppositely-oriented version $T'$ is either inherited from $\phi$ or $\phi'$ or true since $\phi^*(T)=\phi^*(T')=0$. For the second condition, consider how the $d+2$ vertices of a given $T\in S_{d+1}$ are split between $\mathcal{K}$ and $\mathcal{K}'$. If $T$ is entirely in either, then the second condition is inherited from $\phi$ or $\phi'$. If $T$ is entirely in either except for one vertex in the other, then all facets of $\partial T$ are zero under $\phi^*$ except for the one entirely on one side. But this single facet is in $[-1,1]$ under $\phi^*$ (by the second-to-last property of Lemma \ref{flux_lemma_strong}), and so the second condition holds as well. Thus the conditions of Lemma \ref{flux_lb_lemma} are satisfied.

Let $F_1,...,F_n$ be the facets of $\mathcal{K}$ and let $F'_1,...,F'_{n'}$ be the facets of $\mathcal{K}'$. \ref{flux_lb_lemma}. Clearly $F_1,...,F_n,F'_1,...,F'_{n'}$ are the facets of $\mathcal{K}\sqcup\mathcal{K}'$, and by Lemma \ref{flux_lb_lemma} we have 
\begin{align*}
    \phi^*(F_1) + ... + \phi^*(F_n)+\phi^*(F'_1)+...+\phi^*(F'_{n'})&\leq V_\mathbb{Q}(\mathcal{K}\sqcup\mathcal{K}')\\
    V_\mathbb{Q}(\mathcal{K})+V_\mathbb{Q}(\mathcal{K}')&\leq V_\mathbb{Q}(\mathcal{K}\sqcup\mathcal{K}').
\end{align*}
This establishes the lower bound and completes the proof.
\end{proof}
The following proof, due to Peter Doyle, establishes that $V_\mathbb{Q}$ is also additive under ``$d$-simplex'' connected sum for admissible $d$-dimensional complexes.

\begin{thm}\label{thm: connected sum}
    Let $\mathcal{K},\mathcal{K}'$ be admissible $d$-complexes, $F=[v_0,...,v_k],F'=[v'_0,...,v'_k]$ be respective facets, and form the connected sum $\mathcal{K}\#\mathcal{K}'$ by starting with $\mathcal{K}\sqcup\mathcal{K}'$, then identifying the respective $v_i,v'_i$ and deleting $F,F'$. Then 
    $$V_\mathbb{Q}(\mathcal{K}\#\mathcal{K}') = V_\mathbb{Q}(\mathcal{K})+V_\mathbb{Q}(\mathcal{K}').$$
\end{thm}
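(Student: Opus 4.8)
The plan is to prove the two inequalities separately, mirroring the disjoint-union argument of Theorem \ref{thm: disjoint union}, with the upper bound essentially free once the right chain identity is in place. Orient things so that $\mathcal{K}\#\mathcal{K}'$ is admissible; this forces $F$ and $F'$ to be identified to \emph{opposite} orientations, so that on the glued vertex set $F+F'=0$ and hence, as $d$-chains, $\mathcal{K}\#\mathcal{K}' = \mathcal{K}+\mathcal{K}'$ (the two copies of the deleted facet cancel). Given optimal rational decompositions $\alpha,\alpha'$ of $\mathcal{K},\mathcal{K}'$, the chain $\alpha+\alpha'$, read on the glued vertex set, satisfies $\partial(\alpha+\alpha')=\mathcal{K}+\mathcal{K}'=\mathcal{K}\#\mathcal{K}'$ and $|\alpha+\alpha'|_1\le |\alpha|_1+|\alpha'|_1$, giving $V_\mathbb{Q}(\mathcal{K}\#\mathcal{K}')\le V_\mathbb{Q}(\mathcal{K})+V_\mathbb{Q}(\mathcal{K}')$.

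For the lower bound I would build a single flux function on $\mathcal{K}\#\mathcal{K}'$ and invoke Lemma \ref{flux_lb_lemma}. Partition the vertices as $A\sqcup C\sqcup B$, where $C=\{v_0,\dots,v_d\}$ is the glued core (the vertices of $F$), $A$ the remaining vertices of $\mathcal{K}$, and $B$ the remaining vertices of $\mathcal{K}'$. Assuming $d\ge 1$ (the case $d=0$ being degenerate), choose two distinct core vertices $v_0,v_1$, and let $\phi$ be an optimal flux function for $\mathcal{K}$ centered at $v_0$ and $\phi'$ one for $\mathcal{K}'$ centered at $v_1$, as provided by Lemma \ref{flux_lemma_strong}. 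For an oriented $d$-simplex $T$ on the vertices of $\mathcal{K}\#\mathcal{K}'$ define
\[
\phi^*(T)=\begin{cases}\phi(T)&\text{if the vertices of }T\text{ lie in }A\cup C,\\ \phi'(T)&\text{if the vertices of }T\text{ lie in }C\cup B,\\ 0&\text{otherwise.}\end{cases}
\]
This is well defined, since the only $d$-simplices lying entirely in $C$ are $\pm F$ (as $|C|=d+1$), and because $F$ contains both $v_0$ and $v_1$ the centering forces $\phi(F)=\phi'(F)=0$, so the two clauses agree there. The antisymmetry condition of Lemma \ref{flux_lb_lemma} is then inherited clause by clause.

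The crux is verifying the second condition, $\phi^*(B_1)+\cdots+\phi^*(B_{d+2})\le 1$ for every candidate $(d+1)$-simplex $T$. I would organize this by how the $d+2$ vertices of $T$ split as $p$ in $A$, $q$ in $C$, $r$ in $B$. If $r=0$ or $p=0$, then $T$ lies on one side and the bound is inherited from $\phi$ or $\phi'$. If $p,r\ge 2$, then removing one vertex can never clear all of $A$ or all of $B$, so every facet is mixed and the flux is $0$. If exactly one of $p,r$ equals $1$ (the other $\ge 2$), a single facet survives, contributing one value in $[-1,1]$ by the range bound in Lemma \ref{flux_lemma_strong}. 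The genuine obstacle is the \emph{bridge} case $p=r=1$, $q=d$: here two facets survive, $G=T\setminus\{b\}\subseteq A\cup C$ contributing $\pm\phi(G)$ and $G'=T\setminus\{a\}\subseteq C\cup B$ contributing $\pm\phi'(G')$ (with $a\in A$, $b\in B$), and a priori $\pm\phi(G)\pm\phi'(G')$ only lies in $[-2,2]$. This is precisely why centering both functions at a single shared vertex fails, and why the two-vertex centering is needed: the core of $T$ is a $d$-subset of the $(d+1)$-element set $C$, so it omits exactly one core vertex. If it contains $v_0$, then $v_0\in G$ and $\phi(G)=0$; if it omits $v_0$, it must contain $v_1$, so $v_1\in G'$ and $\phi'(G')=0$. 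Either way one term vanishes and the survivor lies in $[-1,1]$.

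Finally I would compute the objective: since $\phi^*$ restricts to $\phi$ on simplices of $\mathcal{K}$ and to $\phi'$ on those of $\mathcal{K}'$, and since $\mathcal{K}\#\mathcal{K}'=\mathcal{K}+\mathcal{K}'$ with $\phi^*(F)=\phi^*(F')=0$, the total flux through the facets equals $\sum_i\phi(F_i)+\sum_j\phi'(F'_j)=V_\mathbb{Q}(\mathcal{K})+V_\mathbb{Q}(\mathcal{K}')$ by the optimality clause of Lemma \ref{flux_lemma_strong}. Lemma \ref{flux_lb_lemma} then yields $V_\mathbb{Q}(\mathcal{K}\#\mathcal{K}')\ge V_\mathbb{Q}(\mathcal{K})+V_\mathbb{Q}(\mathcal{K}')$, matching the upper bound. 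The one bookkeeping point to handle carefully is confirming that $F$ and $F'$ carry opposite orientations on the glued core (which underwrites both the chain identity and the cancellation $\phi^*(F)+\phi^*(F')=0$); beyond that, I expect the bridge case to be the sole real difficulty, with the device of centering the two flux functions at two \emph{different} vertices of $F$ forming the heart of the proof.
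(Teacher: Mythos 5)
Your proposal is correct and takes essentially the same route as the paper's proof: the upper bound by combining optimal decompositions, and the lower bound by gluing optimal flux functions centered at two \emph{distinct} vertices $v_0$ and $v_1$ of the identified facet (Lemma \ref{flux_lemma_strong}) and applying Lemma \ref{flux_lb_lemma}, with the ``bridge'' case resolved exactly as in the paper by observing that the $d$-element core must contain $v_0$ or $v_1$, killing one of the two surviving facet terms. Your $(p,q,r)$ bookkeeping is just a reorganization of the paper's case analysis (and in fact handles the sub-case where the non-isolated vertices are all central slightly more cleanly, by inheritance rather than the paper's claim that all facets vanish), but the substance is identical.
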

\begin{proof}
    We proceed by the same strategy as in the proof of Theorem \ref{thm: disjoint union}.\\
    Clearly $V_\mathbb{Q}(\mathcal{K}\#\mathcal{K}') \leq V_\mathbb{Q}(\mathcal{K})+V_\mathbb{Q}(\mathcal{K}')$, since we can combine optimal decompositions of each part.\\
    For the lower bound, let $\phi,\phi'$ be optimal flux functions of $\mathcal{K}$,$\mathcal{K}$ centered at $v_0,v'_1$ respectively. Preparing to apply Lemma \ref{flux_lb_lemma}, we define a flux function $\phi^*$ on $\mathcal{K}\#\mathcal{K}'$ as follows:\\
    $$\phi^*(T)=\begin{cases}
    \phi(T) & \text{if the vertices of $T$ are all in $\mathcal{K}$ (including the $v_i/v'_i$),}\\
    \phi'(T) & \text{if the vertices of $T$ are all in $\mathcal{K}'$ (including the $v_i/v'_i$),}\\
     0 & \text{else.}\\
    \end{cases}$$
    Note that this is well-defined in the case that $T$ is on all the $v_i/v'_i$ because, since $\phi, \phi'$ based at $v_0,v'_1$, $\phi$ and $\phi'$ would both yield 0 in that case (see the last condition of Lemma \ref{flux_lemma_strong}). Now we verify that $\phi^*$ satisfies the conditions of Lemma \ref{flux_lb_lemma}. The first condition holds since it is either inherited from $\phi,\phi'$, or true since $0=-0$. For the second condition, we once again break into cases based on how the vertices of $T\in S_{d+1}$ are distributed between those of $\mathcal{K}$ and $\mathcal{K}'$:
    \begin{enumerate}
        \item \textit{All vertices in $\mathcal{K}$ (including the $v_i/v'_i$) or all vertices in $\mathcal{K}'$ (including the $v_i/v'_i$)}.\\
        In this case $\phi^*(\partial T)\leq 1$ is inherited from $\phi,\phi'$.
        \item \textit{At least 2 vertices not in $\mathcal{K}$ (including the $v_i/v'_i$), and at least 2 vertices not in $\mathcal{K}'$ (including the $v_i/v'_i$)}.\\
        In this case $\phi^*(\partial T)=0$, since every facet of the boundary of $T$ is 0 under $\phi^*$ by definition. Thus the condition holds.
        \item \textit{All but one vertex in either $\mathcal{K}$ (including the $v_i/v'_i$) or $\mathcal{K'}$ (including the $v_i/v'_i$)}.\\
        Let $w$ be the isolated vertex and $A$ the set of the remaining $d+1$ vertices of $T$. We break into cases based on how many of the $v_i/v'_i$ are included in $A$. If there are fewer than $d$, then the only facet of $\partial T$ which is nonzero under $\phi^*$ is the one on the vertices of $A$. The contribution of this facet must be in the range $[-1,1]$, and so the condition holds.\\
        If $d+1$ of the $v_i/v'_i$ are included in $A$ (i.e. $A=\{v_i/v'_i\}$, then, since any facet of $\partial T$ including $v_0/v'_0$ or $v_1/v'_1$ is zero under $\phi^*$, all facets are zero under $\phi^*$ and so the claim holds.\\
        This leaves the case where exactly $d$ of the $v_i/v'_i$ are included in $A$, and so we have two isolated vertices, $w$ from before and $w'$ in the other complex. The two candidates for non-zero value under $\phi$ are $w$ and the center together with $w'$ and the center. But (at least) one of $v_0/v'_0$ or $v_1/v'_1$ must be included in these two, and so we have at most one facet of $\partial T$ which is nonzero under $\phi^*$. Thus the claim holds.   
    \end{enumerate}
    Thus $\phi^*$ satisfies the hypotheses of Lemma \ref{flux_lb_lemma}.
    
    Let $F,F_1,...,F_{n-1}$ be the facets of $\mathcal{K}$ and let $F',F'_1,...,F'_{n'-1}$ be the facets of $\mathcal{K}'$. \ref{flux_lb_lemma}. Clearly $F_1,...,F_{n-1},F'_1,...,F'_{n'-1}$ are the facets of $\mathcal{K}\#\mathcal{K}'$, and by Lemma \ref{flux_lb_lemma} we have 
    \begin{align*}
    \phi^*(F_1) + ... + \phi^*(F_{n-1})+\phi^*(F'_1)+...+\phi^*(F'_{n'-1})&\leq V_\mathbb{Q}(\mathcal{K}\#\mathcal{K}')\\
    \intertext{Since $F$ and $F'$ are both zero under $\phi^*$, however, we have}
    \phi^*(F)+\phi^*(F_1) + ... + \phi^*(F_{n-1})+\phi^*(F')+\phi^*(F'_1)+...+\phi^*(F'_{n'-1})&\leq V_\mathbb{Q}(\mathcal{K}\#\mathcal{K}')\\
    V_\mathbb{Q}(\mathcal{K})+V_\mathbb{Q}(\mathcal{K}')&\leq V_\mathbb{Q}(\mathcal{K}\#\mathcal{K}').
\end{align*}
    This establishes the lower bound and completes the proof.
\end{proof}

\section{Additivity of $V_\mathbb{Z}$}\label{additivity_section_z}
We establish some results on the additivity of $V_\mathbb{Z}$ for admissible complexes of the same dimension.
\begin{thm}\label{thm:Z disjoint union}
    Let $\mathcal{K}$, $\mathcal{K}'$ be admissible $d$-complexes, and let $\mathcal{K}\sqcup\mathcal{K}'$ be their disjoint union. Then $$V_\mathbb{Z}(\mathcal{K}\sqcup\mathcal{K}') = V_\mathbb{Z}(\mathcal{K})+V_\mathbb{Z}(\mathcal{K}').$$
\end{thm}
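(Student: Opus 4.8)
The plan is to establish the nontrivial inequality $V_\mathbb{Z}(\mathcal{K}\sqcup\mathcal{K}') \ge V_\mathbb{Z}(\mathcal{K}) + V_\mathbb{Z}(\mathcal{K}')$; the reverse direction is immediate, since optimal integral decompositions of $\mathcal{K}$ and $\mathcal{K}'$ live on disjoint vertex sets, hence have disjoint supports, so adding them gives a decomposition of $\mathcal{K}\sqcup\mathcal{K}'$ whose $1$-norm is exactly $V_\mathbb{Z}(\mathcal{K}) + V_\mathbb{Z}(\mathcal{K}')$. The essential difficulty, and the reason I cannot simply reuse the strategy of Theorem~\ref{thm: disjoint union}, is that the flux/LP-duality machinery of Section~\ref{sec_flux_lemmas} only certifies lower bounds on $V_\mathbb{Q}$. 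To bound $V_\mathbb{Z}$ from below I would instead argue purely integrally, by pushing an optimal integral decomposition of the union onto each summand through a simplicial vertex-collapse map.

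Concretely, fix vertices $p\in V(\mathcal{K})$ and $p'\in V(\mathcal{K}')$, and let $r$ be the vertex map that is the identity on $V(\mathcal{K})$ and sends every vertex of $\mathcal{K}'$ to $p$; define $r'$ symmetrically. Each induces a chain map $r_\ast, r'_\ast$ on oriented simplicial chains (a simplex acquiring a repeated image vertex maps to $0$), and these commute with $\partial$. Taking an optimal integral $\alpha\in C_{d+1}(\mathcal{K}\sqcup\mathcal{K}';\mathbb{Z})$ with $\partial\alpha=\mathcal{K}+\mathcal{K}'$ and $|\alpha|_1=V_\mathbb{Z}(\mathcal{K}\sqcup\mathcal{K}')$, I would compute $\partial(r_\ast\alpha)=r_\ast(\mathcal{K}+\mathcal{K}')$. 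Since $r$ fixes $V(\mathcal{K})$ we have $r_\ast\mathcal{K}=\mathcal{K}$, and since each facet of $\mathcal{K}'$ has $d+1\ge 2$ vertices all sent to $p$, it becomes degenerate, so $r_\ast\mathcal{K}'=0$. Hence $\beta:=r_\ast\alpha$ is an integral decomposition of $\mathcal{K}$ supported on $V(\mathcal{K})$, and symmetrically $\beta':=r'_\ast\alpha$ decomposes $\mathcal{K}'$; in particular $|\beta|_1\ge V_\mathbb{Z}(\mathcal{K})$ and $|\beta'|_1\ge V_\mathbb{Z}(\mathcal{K}')$.

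The remaining step is to show $|\beta|_1+|\beta'|_1\le |\alpha|_1$, and this is where I expect the real content to sit. Writing $\alpha=\sum_T c_T\,T$, the map $r_\ast$ sends each basis simplex to $0$ or to $\pm$ a single basis simplex, so $|\beta|_1\le \sum_{T:\,r_\ast T\ne 0}|c_T|$ and likewise for $\beta'$ (collisions among images only decrease the norm). It therefore suffices to verify that no single $(d+1)$-simplex $T$ survives both collapses, i.e. that the surviving index sets are disjoint. But $r_\ast T\ne 0$ forces $T$ to have at most one vertex in $V(\mathcal{K}')$, hence at least $d+1$ vertices in $V(\mathcal{K})$, while $r'_\ast T\ne 0$ forces at least $d+1$ vertices in $V(\mathcal{K}')$; as $T$ has only $d+2$ vertices, both can hold only if $d+1\le 1$, which is impossible for $d\ge 1$ (the $d=0$ case carries no nontrivial boundary condition and may be excluded). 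Thus the index sets are disjoint, giving $|\beta|_1+|\beta'|_1\le \sum_T|c_T|=|\alpha|_1$ and hence $V_\mathbb{Z}(\mathcal{K})+V_\mathbb{Z}(\mathcal{K}')\le V_\mathbb{Z}(\mathcal{K}\sqcup\mathcal{K}')$. The crux, distinguishing this from the $V_\mathbb{Q}$ argument, is exactly this integral splitting: a single $(d+1)$-simplex straddling both complexes cannot be charged to both summands at once.
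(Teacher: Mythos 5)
Your proposal is correct (for $d\geq 1$, which is the intended scope), and it reaches the paper's conclusion by a genuinely different route for the key step. In substance you build the same modified chain as the paper: the paper fixes $v\in\mathcal{K}$, $v'\in\mathcal{K}'$, keeps pure terms, replaces the lone foreign vertex of a straddling term by $v$ (resp.\ $v'$), and discards terms with at least two vertices on each side; your $\beta+\beta'=r_\ast\alpha+r'_\ast\alpha$ is exactly this chain. The difference is how the boundary identity is certified. The paper verifies $\partial\alpha'=\mathcal{K}\sqcup\mathcal{K}'$ by a case analysis on $d$-simplices $\Sigma$, with the hard case ($\Sigma$ containing the base vertex) handled by an explicit graph-theoretic ``divergence'' argument showing the Step-2 contributions cancel. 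You instead invoke the standard fact that a vertex map induces a chain map on oriented simplicial chains (degenerate images sent to $0$) commuting with $\partial$, so $\partial\beta=r_\ast(\mathcal{K}+\mathcal{K}')=\mathcal{K}$ comes for free; the combinatorial content is relocated into your disjoint-support observation, that no $(d+1)$-simplex can have $\geq d+1$ vertices in each factor, which gives $|\beta|_1+|\beta'|_1\leq|\alpha|_1$. Your version buys concision and rigor: it dispenses with the paper's most delicate step, and it automatically handles a corner case the paper glosses over (a straddling simplex already containing $v$, whose replacement is degenerate and must be treated as $0$). What the paper's hands-on construction buys is self-containedness --- no appeal to the induced-chain-map lemma --- and an explicit bookkeeping of where each term of $\alpha$ goes, which is what its two corollaries read off and what it then adapts, with extra cases, to the connected-sum setting of Theorem \ref{thm: Z connected sum}, where the two complexes share vertices and a naive collapse map needs more care. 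If you write your argument up, you should either prove the chain-map lemma for oriented simplicial chains on a vertex set or give a precise citation, since it is the load-bearing step; your exclusion of $d=0$ is reasonable and should simply be stated, as the disjointness claim genuinely fails there (a straddling edge survives both collapses).
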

\begin{proof}
Clearly we have $V_\mathbb{Z}(\mathcal{K}\sqcup\mathcal{K'})\leq V_\mathbb{Z}(\mathcal{K})+V_\mathbb{Z}(\mathcal{K}')$ since we can combine optimal decompositions of $\mathcal{K},\mathcal{K}'$ to obtain a candidate decomposition of $\mathcal{K}\sqcup\mathcal{K'}$.\\
For the reverse inclusion, suppose we have a $(d+1)$-chain $\alpha$ realizing $V_\mathbb{Z}(\mathcal{K}\sqcup\mathcal{K}')$. Fix vertices $v,v'$ in $\mathcal{K},\mathcal{K}'$ respectively. We obtain a new chain $\alpha'$ with no interaction between $\mathcal{K}$ and $\mathcal{K'}$ as follows:
\begin{enumerate}
    \item For each term $c_i T_i$ of $\alpha$, where the vertices of $V_i$ are either entirely in $\mathcal{K}$ or entirely in $\mathcal{K}'$, add this term to $\alpha'$ without modification.
    \item For each term $c_i T_i$ of $\alpha$, where the $T_i$ has all but one vertex in $\mathcal{K}$, contribute the term $c_i \tilde{T}_i$ to $\alpha'$ --- where $\tilde{T}$ is obtained by replacing the vertex of $T$ in $\mathcal{K'}$ by $v$.
    \item For each term $c_i T_i$ of $\alpha$, where the $T_i$ has all but one vertex in $\mathcal{K}'$, contribute the term $c_i \tilde{T}_i$ to $\alpha'$ --- where $\tilde{T}$ is obtained by replacing the vertex of $T$ in $\mathcal{K}$ by $v'$.
    \item Other terms of $\alpha$ are ignored, i.e. no contribution to $\alpha'$.
\end{enumerate}
Clearly we have $|\alpha'|_1\leq |\alpha|_1=V_\mathbb{Z}(\mathcal{K}\sqcup\mathcal{K}')$, and we claim that $\partial\alpha'=\mathcal{K}\sqcup\mathcal{K}'$. To see this, consider any oriented $d$-simplex $\Sigma$ on the vertices of $\mathcal{K}\sqcup\mathcal{K}'$ and the number of times it appears in $\partial \alpha'$:
\begin{enumerate}
    \item If the vertices of $\Sigma$ are not either all in $\mathcal{K}$ or all in $\mathcal{K}'$, then $\Sigma$ occurs 0 times in both $\partial\alpha$ (because $\partial\alpha=\mathcal{K}\sqcup\mathcal{K}'$) and in $\partial \alpha'$ (by construction).
    \item If the vertices of $\Sigma$ are all in $\mathcal{K}$ and do not include $v$, then it is clear that $\Sigma$ has the same coefficient in $\partial\alpha$ and $\partial\alpha'$.
    \item If the vertices of $\Sigma$ are all in $\mathcal{K}$ and include a copy of $v$, then we can distinguish the copies of $\Sigma$ in $\partial\alpha'$ by their provenance in $\partial\alpha$. Some copies of $\Sigma$ come directly from copies of $\Sigma$ in $\partial\alpha$, and the rest result from Step 2 in the formation of $\alpha'$. We will show these latter copies resulting from Step 2 have no net contribution by a graph-theoretic argument.
    
    Construct digraph $G$ with vertex set the possible oriented $d$-simplices which would change to $\Sigma$ under Step 2 (i.e. $\Sigma$, with a $v$ replaced by a vertex from $\mathcal{K}'$). Call these graph vertices $\{F_i\}$. Also add a special vertex $\star$ to $G$. Each oriented $(d+1)$-simplex in $\alpha$ with a boundary $F_i$ (or it's negative) falls into 2 cases: (1) the kind relevant for Step 2 above, with $d+1$ vertices in $\mathcal{K}$, $1$ vertex in $\mathcal{K}$'; and (2) the kind which is ignored in the construction of $\alpha'$, with $d$ vertices in $\mathcal{K}$ and 2 vertices in $\mathcal{K}'$. For each type (1) $(d+1)$-simplex $\sigma$ we add directed edges between $\star$ and the contained $F_i$, with edge \textit{to} the $F_i$ if the $F_i$ appears on the boundary of $\sigma$ with positive orientation, \textit{away} otherwise. For each type (2) $(d+1)$-simplex $\sigma$ we add a directed edge between the two $F_i$ on its boundary, from the one with negative orientation to the one with positive orientation.
    
    Notice that, in $G$, the in-degrees and out-degrees of each $F_i$ must be equal, because $\partial\alpha$ has no triangles with vertices split between $\mathcal{K}$ and $\mathcal{K}'$. Notice also that the difference between the in-degree and out-degree of $\star$ is exactly the net contribution to $\Sigma$ resulting from Step 2. But by a divergence-theorem style argument we see that the difference of the total number of in-degrees and out-degrees among the $F_i$ ---namely zero --- is exactly the difference of the in-degree and out-degree of $\star$. Thus $\star$ must also have equal in-degree and out-degree and so we see the Step 2 copies of $\Sigma$ have no net contributions. 
    
    Thus $\Sigma$ has the same number of copies in $\partial \alpha$ and $\partial\alpha'$, completing this case
    \end{enumerate}
The remaining two cases are simply the previous two on the $\mathcal{K}'$ side.

We thus have that $\partial\alpha'=\mathcal{K}\sqcup\mathcal{K}'$, and, since $\alpha'$ has no $(d+1)$-simplices between $\mathcal{K}$ and $\mathcal{K}'$, we have $V_\mathbb{Z}(\mathcal{K})+V_\mathbb{Z}(\mathcal{K}')\leq|\alpha'|$. Combining this with $|\alpha'|_1\leq |\alpha|_1=V_\mathbb{Z}(\mathcal{K}\sqcup\mathcal{K}')$ establishes our lower bound and completes the proof.
\end{proof}
This proof yields two interesting corollaries.
\begin{cor}
    Let $\mathcal{K}$, $\mathcal{K}'$ be admissible $d$-complexes, and let $\mathcal{K}\sqcup\mathcal{K}'$ their disjoint union. Then there exists an integral $(d+1)$-chain optimally decomposing $\mathcal{K}\sqcup\mathcal{K}$ (i.e. $\partial\alpha=\mathcal{K}\sqcup\mathcal{K}'$, $|\alpha|_1=V_\mathbb{Z}(\mathcal{K}\sqcup\mathcal{K}')$) such that every simplex of $\alpha$ (with non-zero coefficient) is either entirely on the vertices of $\mathcal{K}$ or entirely on the vertices of $\mathcal{K}'$.
\end{cor}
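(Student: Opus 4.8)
The plan is to observe that the chain $\alpha'$ built in the proof of Theorem \ref{thm:Z disjoint union} is already the witness we need, so no fresh construction is required. Recall that in that proof one starts from an integral chain $\alpha$ realizing $V_\mathbb{Z}(\mathcal{K}\sqcup\mathcal{K}')$ and produces $\alpha'$ by keeping each ``pure'' term (entirely in $\mathcal{K}$ or entirely in $\mathcal{K}'$) unchanged, sliding each ``almost-pure'' term (all but one vertex in one complex) back into that complex by replacing its stray vertex with the base point $v$ or $v'$, and discarding the remaining terms. I would first record that this operation manifestly preserves integrality: every term $c_i T_i$ of $\alpha$ has $c_i\in\mathbb{Z}$, and Steps 1--3 contribute terms $c_i T_i$ or $c_i \tilde T_i$ with the same integer coefficient, while Step 4 contributes nothing. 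Hence $\alpha'$ is an integral chain.

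Next I would invoke the two facts already established in the proof of Theorem \ref{thm:Z disjoint union}: that $\partial\alpha'=\mathcal{K}\sqcup\mathcal{K}'$ and that $|\alpha'|_1\leq|\alpha|_1=V_\mathbb{Z}(\mathcal{K}\sqcup\mathcal{K}')$. Since $\alpha'$ is a valid integral decomposition of $\mathcal{K}\sqcup\mathcal{K}'$, its $1$-norm is at least the minimum $V_\mathbb{Z}(\mathcal{K}\sqcup\mathcal{K}')$; combined with the reverse inequality this forces $|\alpha'|_1=V_\mathbb{Z}(\mathcal{K}\sqcup\mathcal{K}')$, so $\alpha'$ is optimal.

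Finally, the separation property is immediate from the construction: every simplex appearing in $\alpha'$ with nonzero coefficient arises from Step 1, 2, or 3, and in each of those cases the resulting simplex has all of its vertices in $\mathcal{K}$ or all of its vertices in $\mathcal{K}'$ (Steps 2 and 3 precisely relocate the one stray vertex to the base point of the appropriate complex). Collecting like terms can only merge or cancel simplices that already lie within a single complex, so it does not disturb this property. Therefore $\alpha'$ is an integral optimal decomposition none of whose simplices straddle the two complexes, which is exactly the claim.

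I do not anticipate a genuine obstacle here, since every nontrivial ingredient---most notably the verification that the vertex-relocation does not change the boundary, carried out via the divergence-theorem/in-degree-out-degree argument in the proof of Theorem \ref{thm:Z disjoint union}---has already been done. The only point to be careful about is the logical direction: the corollary is not merely a restatement of the theorem but the observation that the specific chain produced \emph{en route} to the theorem happens to be both integral and complex-separated, and that these two properties survive together at an optimal $1$-norm.
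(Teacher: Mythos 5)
Your proposal is correct and is exactly the paper's argument: the paper's proof of this corollary is the one-line remark ``From the proof, simply take $\alpha'$,'' and you have simply spelled out why $\alpha'$ works (integrality, boundary, optimality by minimality, and complex-separation by construction). No further comment is needed.
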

\begin{proof}
From the proof, simply take $\alpha'$.
\end{proof}

\begin{cor}
    Let $\mathcal{K}$, $\mathcal{K}'$ be admissible $d$-complexes, $\mathcal{K}\sqcup\mathcal{K}'$ their disjoint union, and $\alpha$ any integral $(d+1)$-chain optimally decomposing $\mathcal{K}\sqcup\mathcal{K}$ (i.e. $\partial\alpha=\mathcal{K}\sqcup\mathcal{K}'$, $|\alpha|_1=V_\mathbb{Z}(\mathcal{K}\sqcup\mathcal{K}')$).
    Then every simplex of $\alpha$ (with non-zero coefficient) must fall into one of the following cases:
    \begin{itemize}
        \item entirely on the vertices of $\mathcal{K}$
        \item entirely on the vertices of $\mathcal{K}'$
        \item entirely on the vertices of $\mathcal{K}$ except for one vertex in $\mathcal{K}'$
        \item entirely on the vertices of $\mathcal{K}'$ except for one vertex in $\mathcal{K}$.
    \end{itemize}.
\end{cor}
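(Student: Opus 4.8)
The plan is to reuse the construction $\alpha \mapsto \alpha'$ from the proof of Theorem \ref{thm:Z disjoint union}, but now to bookkeep exactly how much $1$-norm is lost in passing from $\alpha$ to $\alpha'$. Recall that $\alpha'$ keeps every term whose simplex lies entirely in $\mathcal{K}$ or entirely in $\mathcal{K}'$, \emph{retracts} each simplex with exactly one vertex on the wrong side (replacing that vertex by the fixed basepoint $v$ or $v'$), and \emph{discards} every simplex with at least two vertices on each side. The four cases listed in the statement are precisely the four types of term that survive (either unchanged or retracted), so the discarded terms are exactly those the corollary claims cannot occur with non-zero coefficient. The strategy is to show that discarding any such term strictly wastes $1$-norm, which is incompatible with optimality.

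First I would establish the quantitative loss bound
$$|\alpha'|_1 \;\leq\; |\alpha|_1 - W, \qquad W := \sum_{T \text{ discarded}} |c_T|,$$
where the sum is over the discarded terms $c_T T$ of $\alpha$. This follows from the triangle inequality after collecting like terms: the coefficient of each surviving simplex $S$ in $\alpha'$ is the sum of the coefficients $c_T$ of the source terms $T$ of $\alpha$ that map to $S$ (via the identity on the two pure types and via the retraction on the two single-crossing types), so the absolute value of that coefficient is at most $\sum |c_T|$; summing over all surviving $S$ gives $|\alpha'|_1 \leq \sum_{T \text{ not discarded}} |c_T| = |\alpha|_1 - W$. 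Degenerate retractions (those where the chosen basepoint already appears in $T$) simply produce a zero simplex and only reinforce this inequality. Next I would invoke optimality: by the proof of Theorem \ref{thm:Z disjoint union} we have $\partial \alpha' = \mathcal{K}\sqcup\mathcal{K}'$, so $\alpha'$ is a legitimate integral decomposition and hence $|\alpha'|_1 \geq V_\mathbb{Z}(\mathcal{K}\sqcup\mathcal{K}')$. Combining this with $|\alpha|_1 = V_\mathbb{Z}(\mathcal{K}\sqcup\mathcal{K}')$ and the loss bound yields the squeeze
$$V_\mathbb{Z}(\mathcal{K}\sqcup\mathcal{K}') \;=\; |\alpha|_1 \;\leq\; |\alpha'|_1 \;\leq\; |\alpha|_1 - W,$$
which forces $W \leq 0$, and since $W \geq 0$ we conclude $W = 0$. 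As each discarded term has non-zero coefficient, $|c_T| > 0$, so $W = 0$ is only possible if there are no discarded terms at all; every term of $\alpha$ with non-zero coefficient therefore falls into one of the four listed cases.

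I do not expect a deep obstacle here, since the corollary is essentially a sharpening of the inequality already proved in Theorem \ref{thm:Z disjoint union}. The only step requiring care is the loss bound $|\alpha'|_1 \leq |\alpha|_1 - W$: one must make sure the triangle-inequality accounting covers all the ways source terms can collide after retraction — single-crossing terms colliding with one another and with pure terms mapping to the same simplex — and that degenerate retractions are handled by noting they vanish. Once this clean accounting is in place, the optimality squeeze delivers the result immediately.
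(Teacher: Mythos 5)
Your proposal is correct and takes essentially the same approach as the paper: the paper's one-line proof (``examining the inequalities in the proof of Theorem \ref{thm:Z disjoint union}, we must actually have $|\alpha'|_1=|\alpha|_1$'') is precisely the optimality squeeze you carry out, reusing the same $\alpha\mapsto\alpha'$ construction and the fact that $\partial\alpha'=\mathcal{K}\sqcup\mathcal{K}'$. The only difference is that you make explicit the loss bound $|\alpha'|_1\leq|\alpha|_1-W$, including the accounting for collisions and degenerate retractions, which the paper leaves implicit.
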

\begin{proof}
Examining the inequalities in the proof of Theorem \ref{thm:Z disjoint union}, we see that we must actually have $|\alpha'|_1=|\alpha|_1$.
\end{proof}

The following proof establishes that $V_\mathbb{Z}$ is also additive under ``$d$-simplex'' connected sum for admissible $d$-dimensional complexes. 

\begin{thm}\label{thm: Z connected sum}
    Let $\mathcal{K},\mathcal{K}'$ be admissible $d$-complexes, $F=[v_0,...,v_k],F'=[v'_0,...,v'_k]$ be respective facets, and form the connected sum $\mathcal{K}\#\mathcal{K}'$ by starting with $\mathcal{K}\sqcup\mathcal{K}'$, then identifying $v_0,v_0';\ v_1,v_k';...; \ v_k,v_1'$ and deleting $F,F'$. Then 
    $$V_\mathbb{Z}(\mathcal{K}\#\mathcal{K}') = V_\mathbb{Z}(\mathcal{K})+V_\mathbb{Z}(\mathcal{K}').$$
\end{thm}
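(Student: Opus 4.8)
The plan is to follow the strategy of Theorem~\ref{thm:Z disjoint union}, separating an optimal integral decomposition of $\mathcal{K}\#\mathcal{K}'$ into a piece living on $\mathcal{K}$ and a piece living on $\mathcal{K}'$, while importing the idea of centering at two shared vertices from the $V_\mathbb{Q}$ argument of Theorem~\ref{thm: connected sum}. Write $W$ for the $d+1$ identified vertices (those of $F=F'$), $A$ for the remaining vertices of $\mathcal{K}$, and $B$ for the remaining vertices of $\mathcal{K}'$, so that the vertex set of $\mathcal{K}\#\mathcal{K}'$ is $A\sqcup W\sqcup B$. Because the identification is chosen so that $F$ and $F'$ become opposite orientations of a single $d$-simplex, we have $F+F'=0$ as a chain, and hence $\mathcal{K}\#\mathcal{K}'=\mathcal{K}+\mathcal{K}'$ as chains on $A\sqcup W\sqcup B$.

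The upper bound $V_\mathbb{Z}(\mathcal{K}\#\mathcal{K}')\leq V_\mathbb{Z}(\mathcal{K})+V_\mathbb{Z}(\mathcal{K}')$ is then immediate: given optimal integral decompositions $\alpha,\alpha'$ of $\mathcal{K},\mathcal{K}'$, the sum $\alpha+\alpha'$ satisfies $\partial(\alpha+\alpha')=\mathcal{K}+\mathcal{K}'=\mathcal{K}\#\mathcal{K}'$, and since no $(d+1)$-simplex can be supported on the $d+1$ shared vertices alone, there is no cancellation and $|\alpha+\alpha'|_1=|\alpha|_1+|\alpha'|_1$.

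For the lower bound I would take an optimal integral $\alpha$ with $\partial\alpha=\mathcal{K}\#\mathcal{K}'$ and produce a replacement $\alpha'=\alpha'_{\mathcal{K}}+\alpha'_{\mathcal{K}'}$ in which every simplex is supported on $A\cup W$ or on $B\cup W$, with $|\alpha'|_1\leq|\alpha|_1$. As in Theorem~\ref{thm:Z disjoint union}, simplices already living on one side are kept; a mixed simplex with a single stray vertex on one side is pushed by replacing that stray vertex with a fixed shared vertex (the integral analogue of centering); and simplices straddling the two sides more heavily are discarded. The delicate case, matching the troublesome sub-case of Theorem~\ref{thm: connected sum}, is a simplex with exactly one vertex in $A$, one in $B$, and $d$ in $W$: here the two fixed shared vertices $v_0$ and $v_k$ play the role of the two flux centers, and the pushing rule must be arranged so that such simplices are reassigned consistently. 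I expect the verification that $\partial\alpha'_{\mathcal{K}}$ reproduces every facet of $\mathcal{K}$ other than $F$ (and likewise for $\mathcal{K}'$) to be the main obstacle; this should follow from the same in-degree/out-degree ``divergence'' bookkeeping used in Theorem~\ref{thm:Z disjoint union}, now carried out on digraphs whose vertices are the $d$-simplices on $A\cup W$ that collide after pushing.

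The one genuinely new feature is the coefficient of the deleted face $F$. Since $\alpha'_{\mathcal{K}}$ is supported on $A\cup W$, the only boundary facet of $\partial\alpha'_{\mathcal{K}}$ not touching $A$ is $F$ itself, so the separation forces $\partial\alpha'_{\mathcal{K}}=\mathcal{K}+(c-1)F$ for some integer $c$, with the value of $c$ left undetermined. But $\partial\alpha'_{\mathcal{K}}$ is a boundary, hence a cycle, so $0=\partial\bigl(\mathcal{K}+(c-1)F\bigr)=(c-1)\,\partial F$; since $\partial F\neq 0$ for $d\geq 1$, this forces $c=1$, and no correction term is needed. Thus $\alpha'_{\mathcal{K}}$ decomposes $\mathcal{K}$ and $\alpha'_{\mathcal{K}'}$ decomposes $\mathcal{K}'$, giving $V_\mathbb{Z}(\mathcal{K})+V_\mathbb{Z}(\mathcal{K}')\leq|\alpha'_{\mathcal{K}}|_1+|\alpha'_{\mathcal{K}'}|_1=|\alpha'|_1\leq|\alpha|_1=V_\mathbb{Z}(\mathcal{K}\#\mathcal{K}')$, which completes the lower bound.
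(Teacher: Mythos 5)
Your proposal follows the paper's own proof essentially step for step: the same upper bound, the same pushing construction in which stray vertices are replaced by two distinct identified vertices acting as centers (the paper uses $v_0/v_0'$ and $v_1/v_1'$ where you use $v_0$ and $v_k$), the same digraph in-degree/out-degree verification imported from Theorem~\ref{thm:Z disjoint union}, and the same closing observation that $\partial\partial=0$ together with $\partial\mathcal{K}=0$ forces the deleted facet $F$ to appear with the correct coefficient. The pieces you defer --- the divergence bookkeeping and the consistent handling of simplices with $d$ vertices among the identified ones and one vertex on each side --- are precisely the cases the paper works through explicitly, so the difference is one of detail, not of approach.
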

\begin{proof}
We proceed by essentially the same argument as Theorem \ref{thm:Z disjoint union} for disjoint union. We will refer to the identified pairs as $v_0/v_0'$, etc, and call such pairs `central vertices', the set of which we denote $C$.\\
Clearly we have $V_\mathbb{Z}(\mathcal{K}\#\mathcal{K'})\leq V_\mathbb{Z}(\mathcal{K})+V_\mathbb{Z}(\mathcal{K}')$ since we can combine optimal decompositions of $\mathcal{K},\mathcal{K}'$ to obtain a candidate decomposition of $\mathcal{K}\sqcup\mathcal{K'}$ --- simply replace instances of the $v_i$ or $v_i'$ by their identified pairs. \\
For the reverse inclusion, suppose we have a $(d+1)$-chain $\alpha$ realizing $V_\mathbb{Z}(\mathcal{K}\#\mathcal{K}')$. We obtain a new chain $\alpha'$ as follows:
\begin{enumerate}
    \item For each term $c_i T_i$ of $\alpha$, where the vertices of $V_i$ are either entirely in $\mathcal{K}$ (including $C$) or entirely in $\mathcal{K}'$ (including $C$), add this term to $\alpha'$ without modification.
    \item For each term $c_i T_i$ of $\alpha$, where the $T_i$ has all but one vertex in $\mathcal{K}$ (including $C$), contribute the term $c_i \tilde{T}_i$ to $\alpha'$ --- where $\tilde{T}$ is obtained by replacing the vertex of $T$ in $\mathcal{K'}$ by $v_0/v_0'$.
    \item For each term $c_i T_i$ of $\alpha$, where the $T_i$ has all but one vertex in $\mathcal{K}'$ (including $C$), contribute the term $c_i \tilde{T}_i$ to $\alpha'$ --- where $\tilde{T}$ is obtained by replacing the vertex of $T$ in $\mathcal{K}$ by $v_1/v_1'$.
    \item Other terms of $\alpha$ are ignored, i.e. no contribution to $\alpha'$.
\end{enumerate}
Clearly we have $|\alpha'|_1\leq |\alpha|_1=V_\mathbb{Z}(\mathcal{K}\#\mathcal{K}')$, and we claim that $\partial\alpha'=\mathcal{K}\#\mathcal{K}'$. To see this, consider any oriented $d$-simplex $\Sigma$ on the vertices of $\mathcal{K}\#\mathcal{K}'$ and the number of times it appears in $\partial \alpha'$:
\begin{enumerate}
    \item If the vertices of $\Sigma$ are not either all in $\mathcal{K}$ (including $C$) or all in $\mathcal{K}'$ (including $C$), then $\Sigma$ occurs 0 times in both $\partial\alpha$ (because $\partial\alpha=\mathcal{K}\#\mathcal{K}'$) and in $\partial \alpha'$ (by construction).
    \item If the vertices of $\Sigma$ are all in $\mathcal{K}$ (including $C$) and do not include $v_0/v_0'$, then it is clear that $\Sigma$ has the same coefficient in $\partial\alpha$ and $\partial\alpha'$.
    \item Similarly, if the vertices of $\Sigma$ are all in $\mathcal{K'}$ (including $C$) and do not include $v_1/v_1'$, then it is clear that $\Sigma$ has the same coefficient in $\partial\alpha$ and $\partial\alpha'$.
    \item Now consider the case where the vertices of $\Sigma$ are all in $\mathcal{K}$ (including $C$) and include a copy of $v_0/v_0'$, but not $v_1/v_1'$. We address this case by graph-theoretic argument similar to the disjoint-union case. Like before, it suffices to show that the copies of $\Sigma$ arising from Step 2 in the formation of $\alpha$ have no net contribution.\\
    Construct digraph $G$ with vertex set the possible oriented $d$-simplices which would change to $\Sigma$ under Step 2 (i.e. $\Sigma$, with a $v_0/v_0'$ replaced by a vertex from $\mathcal{K}'$). Call these graph vertices $\{F_i\}$. Also add a special vertex $\star$ to $G$. Each oriented $(d+1)$-simplex in $\alpha$ with a boundary $F_i$ (or it's negative) falls into 2 cases: (1) the kind relevant for Step 2 above, with $d+1$ vertices in $\mathcal{K}$ (including $C$), $1$ vertex in $\mathcal{K}'\setminus C$; and (2) the kind which is ignored in the construction of $\alpha'$, with $d$ vertices in $\mathcal{K}$ (including $C$) and 2 vertices in $\mathcal{K}'\setminus C$. For each type (1) $(d+1)$-simplex $\sigma$ we add directed edges between $\star$ and the contained $F_i$, with edge \textit{to} the $F_i$ if the $F_i$ appears on the boundary of $\sigma$ with positive orientation, \textit{away} otherwise. For each type (2) $(d+1)$-simplex $\sigma$ we add a directed edge between the two $F_i$ on its boundary, from the one with negative orientation to the one with positive orientation.
    
    Notice that, in $G$, the in-degrees and out-degrees of each $F_i$ must be equal, because $\partial\alpha$ has no triangles with vertices split between $\mathcal{K}$ and $\mathcal{K}'$. Notice also that the difference between the in-degree and out-degree of $\star$ is exactly the net contribution to $\Sigma$ resulting from Step 2. But by a divergence-theorem style argument we see that the difference of the total number of in-degrees and out-degrees among the $F_i$ ---namely zero --- is exactly the difference of the in-degree and out-degree of $\star$. Thus $\star$ must also have equal in-degree and out-degree and so we see the Step 2 copies of $\Sigma$ have no net contributions. 
    
    Thus $\Sigma$ has the same number of copies in $\partial \alpha$ and $\partial\alpha'$, completing this case.
    
    \item The case where the vertices of $\Sigma$ are all in $\mathcal{K'}$ (including $C$) and include a copy of $v_1/v_1'$, but not $v_0/v_0'$ proceeds by exactly the same argument as the previous case.

    \item The final case is when the vertices of $\Sigma$ are exactly the vertices of $C$. But this case is actually simple since copies of $\Sigma$ are not created or destroyed in the process of forming $\alpha'$.
    \end{enumerate}
This establishes that $\alpha$ and $\alpha'$ have the same boundary, namely $\mathcal{K}\#\mathcal{K}'$. Next we note that the $(d+1)$-simplices of $\alpha'$ on the vertices of $\mathcal{K}$ (including $C$) have boundary $\mathcal{K}$. This follows from the proof since the boundary must be exactly $\mathcal{K}$ except possibly for the number of copies of the $d$-simplices on the central vertices $C$. But because the boundary of a boundary must be zero, and $\partial K=0$, it's not hard to see we must have exactly one copy of $[v_0,...,v_k]$ also. Similarly, the $(d+1)$-simplices of $\alpha'$ on the vertiecs of $\mathcal{K}'$ (including $C$) have boundary $\mathcal{K'}$.\\
Thus we must have $V_\mathbb{Z}(\mathcal{K})+V_\mathbb{Z}(\mathcal{K}')\leq|\alpha'|$. Combining this with $|\alpha'|_1\leq |\alpha|_1=V_\mathbb{Z}(\mathcal{K}\#\mathcal{K}')$ establishes our lower bound and completes the proof.
\end{proof}
We also have the analogous corollaries to the disjoint union case.
\begin{cor}
     Let $\mathcal{K},\mathcal{K}'$ be admissible $d$-complexes, $F=[v_0,...,v_k],F'=[v'_0,...,v'_k]$ be respective facets, and form the connected sum $\mathcal{K}\#\mathcal{K}'$ by starting with $\mathcal{K}\sqcup\mathcal{K}'$, then identifying $v_0,v_0';\ v_1,v_k';...; \ v_k,v_1'$ and deleting $F,F'$. Then there exists an integral $(d+1)$-chain optimally decomposing $\mathcal{K}\#\mathcal{K}$ (i.e. $\partial\alpha=\mathcal{K}\sqcup\mathcal{K}'$, $|\alpha|_1=V_\mathbb{Z}(\mathcal{K}\sqcup\mathcal{K}')$) such that every simplex of $\alpha$ (with non-zero coefficient) is either entirely on the vertices of $\mathcal{K}$ (including central vertices) or entirely on the vertices of $\mathcal{K}'$ (including central vertices).
\end{cor}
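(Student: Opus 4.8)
The plan is to extract the desired chain directly from the proof of Theorem \ref{thm: Z connected sum}, in exact parallel with the way the analogous corollary for disjoint union was read off from Theorem \ref{thm:Z disjoint union}. Recall that the proof of Theorem \ref{thm: Z connected sum} starts with an arbitrary integral $(d+1)$-chain $\alpha$ realizing $V_\mathbb{Z}(\mathcal{K}\#\mathcal{K}')$ and builds from it a new integral chain $\alpha'$ via the four-step rewriting procedure. I would simply take this $\alpha'$ as the witness and verify it has all the claimed features; the entire content of the corollary is that the construction has already done the work.

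First I would note that $\alpha'$ is integral and satisfies $\partial\alpha' = \mathcal{K}\#\mathcal{K}'$: the construction only rearranges integer-coefficient terms of $\alpha$, and the boundary identity is precisely the conclusion of the six-case analysis in that proof. Next I would establish optimality. The construction gives $|\alpha'|_1 \le |\alpha|_1 = V_\mathbb{Z}(\mathcal{K}\#\mathcal{K}')$, while the additivity established in Theorem \ref{thm: Z connected sum}, together with the fact (also shown there) that the $\mathcal{K}$-side and $\mathcal{K}'$-side pieces of $\alpha'$ separately decompose $\mathcal{K}$ and $\mathcal{K}'$, forces $|\alpha'|_1 \ge V_\mathbb{Z}(\mathcal{K}) + V_\mathbb{Z}(\mathcal{K}') = V_\mathbb{Z}(\mathcal{K}\#\mathcal{K}')$. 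Hence $|\alpha'|_1 = V_\mathbb{Z}(\mathcal{K}\#\mathcal{K}')$, so $\alpha'$ is an optimal integral decomposition.

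Finally I would read off the separation property directly from Steps 1--4 of the construction. Step 1 retains only those simplices already lying entirely on one side (including the central vertices $C$); Steps 2 and 3 replace the single foreign vertex of a near-separated simplex by a central vertex ($v_0/v_0'$ or $v_1/v_1'$), thereby pulling the simplex entirely onto a single side; and Step 4 discards the remainder. Thus every simplex of $\alpha'$ with nonzero coefficient lies entirely on the vertices of $\mathcal{K}$ (including $C$) or entirely on the vertices of $\mathcal{K}'$ (including $C$), which is exactly the desired property.

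There is no genuine obstacle here, since all three facts are immediate consequences of the construction already carried out for Theorem \ref{thm: Z connected sum}; the statement is to that theorem what the first corollary is to Theorem \ref{thm:Z disjoint union}, and the one-line proof ``take $\alpha'$'' suffices. The only point deserving a moment's care is the bookkeeping in Steps 2 and 3: replacing the foreign vertex by a \emph{central} vertex genuinely lands the simplex on a single side, and this is guaranteed precisely because the central vertices belong to both $\mathcal{K}$ and $\mathcal{K}'$.
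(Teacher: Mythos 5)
Your proposal is correct and matches the paper's intended argument exactly: the paper (whose proof here is left implicit, mirroring the disjoint-union corollary's ``From the proof, simply take $\alpha'$'') takes the chain $\alpha'$ constructed in the proof of Theorem \ref{thm: Z connected sum}, whose integrality, boundary, optimality, and one-sidedness are established there just as you describe. Your only addition is spelling out the verification, including the correct observation that optimality follows because the two one-sided pieces of $\alpha'$ separately decompose $\mathcal{K}$ and $\mathcal{K}'$.
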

\begin{proof}[]
\end{proof}

\begin{cor}
     Let $\mathcal{K},\mathcal{K}'$ be admissible $d$-complexes, $F=[v_0,...,v_k],F'=[v'_0,...,v'_k]$ be respective facets, and form the connected sum $\mathcal{K}\#\mathcal{K}'$ by starting with $\mathcal{K}\sqcup\mathcal{K}'$, then identifying $v_0,v_0';\ v_1,v_k';...; \ v_k,v_1'$ and deleting $F,F'$. Let $\alpha$ be any integral $(d+1)$-chain optimally decomposing $\mathcal{K}\#\mathcal{K}$ (i.e. $\partial\alpha=\mathcal{K}\#\mathcal{K}'$, $|\alpha|_1=V_\mathbb{Z}(\mathcal{K}\#\mathcal{K}')$).
    Then every simplex of $\alpha$ (with non-zero coefficient) must fall into one of the following cases:
    \begin{itemize}
        \item entirely on the vertices of $\mathcal{K}$ (including central vertices)
        \item entirely on the vertices of $\mathcal{K}'$ (including central vertices)
        \item entirely on the vertices of $\mathcal{K}$ (including central vertices) except for one vertex in $\mathcal{K}'$
        \item entirely on the vertices of $\mathcal{K}'$ (including central vertices) except for one vertex in $\mathcal{K}$.
    \end{itemize}
\end{cor}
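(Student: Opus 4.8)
The plan is to imitate the one-line proof of the analogous disjoint-union corollary: re-examine the chain of inequalities in the proof of Theorem \ref{thm: Z connected sum} and observe that optimality of $\alpha$ collapses every inequality there to an equality. Recall that from an optimal $\alpha$ that proof builds a modified chain $\alpha'$ by the four-step recipe --- retain the terms lying entirely in $\mathcal{K}$ (including $C$) or entirely in $\mathcal{K}'$ (including $C$); retract the single stray vertex of an ``all but one'' term to a central vertex; and discard everything else --- and establishes
$$V_\mathbb{Z}(\mathcal{K})+V_\mathbb{Z}(\mathcal{K}') \le |\alpha'|_1 \le |\alpha|_1 = V_\mathbb{Z}(\mathcal{K}\#\mathcal{K}').$$
Since the theorem also supplies the reverse bound $V_\mathbb{Z}(\mathcal{K}\#\mathcal{K}')\le V_\mathbb{Z}(\mathcal{K})+V_\mathbb{Z}(\mathcal{K}')$, these three quantities agree, and in particular $|\alpha'|_1=|\alpha|_1$.

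First I would partition the terms of $\alpha=\sum_i c_iT_i$ according to the construction: the three retained classes (Steps 1--3), which are exactly the four bullet cases of the statement, and the discarded class (Step 4), consisting of those $T_i$ with at least two vertices in $\mathcal{K}\setminus C$ and at least two in $\mathcal{K}'\setminus C$. The goal then reduces to showing the discarded class is empty.

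Next I would bound $|\alpha'|_1$ from above. Forming $\alpha'$ only drops the Step-4 terms and retracts stray vertices, and any collision of simplices caused by retraction can only shrink the $1$-norm, so
$$|\alpha'|_1 \le \sum_{i\ \text{not Step 4}} |c_i| = |\alpha|_1 - \sum_{i\ \text{Step 4}} |c_i|.$$
Inserting $|\alpha'|_1=|\alpha|_1$ from the squeeze above forces $\sum_{i\ \text{Step 4}}|c_i|=0$. Hence $\alpha$ has no Step-4 terms, and since those are precisely the simplices not covered by the four listed cases, the corollary follows.

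I do not expect a deep obstacle --- the argument is bookkeeping --- but the one point needing care is the upper bound on $|\alpha'|_1$. The retractions in Steps 2 and 3 can map two distinct simplices of $\alpha$ to the same simplex of $\alpha'$, so $|\alpha'|_1$ need not equal $\sum_{i\ \text{not Step 4}}|c_i|$; one must observe that such collisions only cause cancellation, so the inequality still holds, which is all that is needed. (The same squeeze in fact shows no cancellation occurs, but the corollary does not require this.)
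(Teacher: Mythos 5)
Your proposal is correct and is essentially the approach the paper intends: the paper leaves this corollary's proof blank, but its disjoint-union counterpart is proved by exactly your squeeze --- re-examining the inequalities in the proof of the connected-sum theorem and using optimality of $\alpha$ together with $V_\mathbb{Z}(\mathcal{K}\#\mathcal{K}') = V_\mathbb{Z}(\mathcal{K})+V_\mathbb{Z}(\mathcal{K}')$ to force $|\alpha'|_1 = |\alpha|_1$. Your write-up in fact supplies the one detail the paper glosses over, namely the sharpened bound $|\alpha'|_1 \le |\alpha|_1 - \sum_{\text{Step 4}} |c_i|$ (retraction collisions can only cancel, never increase the $1$-norm), which is precisely what turns the equality of norms into the conclusion that no Step-4 terms exist.
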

\begin{proof}[]
\end{proof}

\section{An integrality gap in dimension 1}\label{sec_ig_1d}
\begin{figure}[p]
    \centering
    \tikzmath{\s = 3; \sf = .1; \eo = 1.5; \es = \s * (sqrt(3)/2); \les = \s / 2; \al =.92; \ah = 1.4; \at = (\al + \ah) / 2 + .05; \ta = 4; \tal = 9;}
\begin{tikzpicture}
    \foreach \x in {1,3,5}
        {
        \draw [<-] ({\s * cos(\x * 60)+\sf * (\s * cos((\x + 1) * 60) - (\s * cos(\x * 60)))}, {\s * sin(\x * 60) + \sf *((\s * sin((\x + 1) * 60))-(\s * sin(\x * 60)))}) -- 
        ({\s * cos((\x + 1) * 60) + \sf * ((\s * cos(\x * 60))-(\s * cos((\x + 1) * 60)))}, {\s * sin((\x + 1) * 60)+\sf  * ((\s * sin(\x * 60)) - (\s * sin((\x + 1) * 60)))});
        \draw[->] ({(1-\sf)*\s * cos(\x * 60)}, {(1-\sf)*\s * sin(\x * 60)}) -- ({\sf * \s * cos((\x) * 60)}, {\sf * \s * sin((\x) * 60)});
        \draw[<-] ({1+\sf)*\s*cos(\x * 60)}, {(1+\sf)*\s * sin(\x * 60)}) -- ({\eo * \s * cos((\x) * 60)}, {\eo * \s * sin((\x) * 60)});
        }
    \foreach \x in {0,2,4}
        {
        \draw [<-] ({\s * cos(\x * 60)+\sf * (\s * cos((\x + 1) * 60) - (\s * cos(\x * 60)))}, {\s * sin(\x * 60) + \sf *((\s * sin((\x + 1) * 60))-(\s * sin(\x * 60)))}) -- 
        ({\s * cos((\x + 1) * 60) + \sf * ((\s * cos(\x * 60))-(\s * cos((\x + 1) * 60)))}, {\s * sin((\x + 1) * 60)+\sf  * ((\s * sin(\x * 60)) - (\s * sin((\x + 1) * 60)))});
        \draw[<-] ({(1-\sf)*\s * cos(\x * 60)}, {(1-\sf)*\s * sin(\x * 60)}) -- ({\sf * \s * cos((\x) * 60)}, {\sf * \s * sin((\x) * 60)});
        \draw[->] ({1+\sf)*\s*cos(\x * 60)}, {(1+\sf)*\s * sin(\x * 60)}) -- ({\eo * \s * cos((\x) * 60)}, {\eo * \s * sin((\x) * 60)});
        }
    \node at (0,0) {A};
    \node at ({\s * cos(0 * 60)}, {\s * sin(0 * 60)}) {D};
    \node at ({\s * cos(1 * 60)}, {\s * sin(1 * 60)}) {C};
    \node at ({\s * cos(2 * 60)}, {\s * sin(2 * 60)}) {B};
    \node at ({\s * cos(3 * 60)}, {\s * sin(3 * 60)}) {G};
    \node at ({\s * cos(4 * 60)}, {\s * sin(4 * 60)}) {F};
    \node at ({\s * cos(5 * 60)}, {\s * sin(5 * 60)}) {E};

    \node at ({(\eo +\sf+.05) * \s * cos(0 * 60)}, {(\eo +\sf+.05)*\s * sin(0 * 60)}) {\footnotesize{\textit{to G}}};
    \node at ({(\eo +\sf+.05) * \s * cos(1 * 60)}, {(\eo +\sf+.05)*\s * sin(1 * 60)}) {\footnotesize{\textit{from F}}};
    \node at ({(\eo +\sf+.05) * \s * cos(2 * 60)}, {(\eo +\sf+.05)*\s * sin(2 * 60)}) {\footnotesize{\textit{to E}}};
    \node at ({(\eo +\sf+.1) * \s * cos(3 * 60)}, {(\eo +\sf+.1)*\s * sin(3 * 60)}) {\footnotesize{\textit{from D}}};
    \node at ({(\eo +\sf+.05) * \s * cos(4 * 60)}, {(\eo +\sf+.05)*\s * sin(4 * 60)}) {\footnotesize{\textit{to C}}};
    \node at ({(\eo +\sf+.05) * \s * cos(5 * 60)}, {(\eo +\sf+.05)*\s * sin(5 * 60)}) {\footnotesize{\textit{from B}}};
\end{tikzpicture}
    \caption{The simplest (we believe) example with an integrality gap in dimension 1}
    \label{fig:1dgapmain}
    \tikzmath{\s = 2; \sf = .1; \eo = 1.5; \es = \s * (sqrt(3)/2); \les = \s / 2; \al =.92; \ah = 1.4; \at = (\al + \ah) / 2 + .05; \ta = 4; \tal = 9;}
\begin{tikzpicture}
    \foreach \x in {1,3,5}
        {
        \draw [<-] ({\s * cos(\x * 60)+\sf * (\s * cos((\x + 1) * 60) - (\s * cos(\x * 60)))}, {\s * sin(\x * 60) + \sf *((\s * sin((\x + 1) * 60))-(\s * sin(\x * 60)))}) -- 
        ({\s * cos((\x + 1) * 60) + \sf * ((\s * cos(\x * 60))-(\s * cos((\x + 1) * 60)))}, {\s * sin((\x + 1) * 60)+\sf  * ((\s * sin(\x * 60)) - (\s * sin((\x + 1) * 60)))});
        \draw[->] ({(1-\sf)*\s * cos(\x * 60)}, {(1-\sf)*\s * sin(\x * 60)}) -- ({\sf * \s * cos((\x) * 60)}, {\sf * \s * sin((\x) * 60)}) node[midway,above] {2};
        \draw[<-] ({1+\sf)*\s*cos(\x * 60)}, {(1+\sf)*\s * sin(\x * 60)}) -- ({\eo * \s * cos((\x) * 60)}, {\eo * \s * sin((\x) * 60)});
        }
    \foreach \x in {0,2,4}
        {
        \draw[<-] ({(1-\sf)*\s * cos(\x * 60)}, {(1-\sf)*\s * sin(\x * 60)}) -- ({\sf * \s * cos((\x) * 60)}, {\sf * \s * sin((\x) * 60)}) node[midway,above] {2};
        \draw[->] ({1+\sf)*\s*cos(\x * 60)}, {(1+\sf)*\s * sin(\x * 60)}) -- ({\eo * \s * cos((\x) * 60)}, {\eo * \s * sin((\x) * 60)});
        }
    \node at (0,0) {A};
    \node at ({\s * cos(0 * 60)}, {\s * sin(0 * 60)}) {D};
    \node at ({\s * cos(1 * 60)}, {\s * sin(1 * 60)}) {C};
    \node at ({\s * cos(2 * 60)}, {\s * sin(2 * 60)}) {B};
    \node at ({\s * cos(3 * 60)}, {\s * sin(3 * 60)}) {G};
    \node at ({\s * cos(4 * 60)}, {\s * sin(4 * 60)}) {F};
    \node at ({\s * cos(5 * 60)}, {\s * sin(5 * 60)}) {E};

    \node at ({(\eo +\sf+.1) * \s * cos(0 * 60)}, {(\eo +\sf+.05)*\s * sin(0 * 60)}) {\footnotesize{\textit{to G}}};
    \node at ({(\eo +\sf+.05) * \s * cos(1 * 60)}, {(\eo +\sf+.05)*\s * sin(1 * 60)}) {\footnotesize{\textit{from F}}};
    \node at ({(\eo +\sf+.05) * \s * cos(2 * 60)}, {(\eo +\sf+.05)*\s * sin(2 * 60)}) {\footnotesize{\textit{to E}}};
    \node at ({(\eo +\sf+.2) * \s * cos(3 * 60)}, {(\eo +\sf+.2)*\s * sin(3 * 60)}) {\footnotesize{\textit{from D}}};
    \node at ({(\eo +\sf+.05) * \s * cos(4 * 60)}, {(\eo +\sf+.05)*\s * sin(4 * 60)}) {\footnotesize{\textit{to C}}};
    \node at ({(\eo +\sf+.05) * \s * cos(5 * 60)}, {(\eo +\sf+.05)*\s * sin(5 * 60)}) {\footnotesize{\textit{from B}}};
\end{tikzpicture}
    \caption{1-D integrality gap, reference 1, unmarked edges have multiplicity 1}
    \label{fig:1dgap1}
    \tikzmath{\s = 2; \sf = .1; \eo = 1.5; \es = \s * (sqrt(3)/2); \les = \s / 2; \al =.92; \ah = 1.4; \at = (\al + \ah) / 2 + .05; \ta = 4; \tal = 9;}
\begin{tikzpicture}
    \foreach \x in {1,3,5}
        {
        \draw [<-] ({\s * cos(\x * 60)+\sf * (\s * cos((\x + 1) * 60) - (\s * cos(\x * 60)))}, {\s * sin(\x * 60) + \sf *((\s * sin((\x + 1) * 60))-(\s * sin(\x * 60)))}) -- 
        ({\s * cos((\x + 1) * 60) + \sf * ((\s * cos(\x * 60))-(\s * cos((\x + 1) * 60)))}, {\s * sin((\x + 1) * 60)+\sf  * ((\s * sin(\x * 60)) - (\s * sin((\x + 1) * 60)))});
        \draw[<-] ({1+\sf)*\s*cos(\x * 60)}, {(1+\sf)*\s * sin(\x * 60)}) -- ({\eo * \s * cos((\x) * 60)}, {\eo * \s * sin((\x) * 60)});
        }
    \foreach \x in {0,2,4}
        {
        \draw [<-] ({\s * cos(\x * 60)+\sf * (\s * cos((\x + 1) * 60) - (\s * cos(\x * 60)))}, {\s * sin(\x * 60) + \sf *((\s * sin((\x + 1) * 60))-(\s * sin(\x * 60)))}) -- 
        ({\s * cos((\x + 1) * 60) + \sf * ((\s * cos(\x * 60))-(\s * cos((\x + 1) * 60)))}, {\s * sin((\x + 1) * 60)+\sf  * ((\s * sin(\x * 60)) - (\s * sin((\x + 1) * 60)))}) node[midway,above] {2};
        \draw[->] ({1+\sf)*\s*cos(\x * 60)}, {(1+\sf)*\s * sin(\x * 60)}) -- ({\eo * \s * cos((\x) * 60)}, {\eo * \s * sin((\x) * 60)});
        }
    \node at ({\s * cos(0 * 60)}, {\s * sin(0 * 60)}) {D};
    \node at ({\s * cos(1 * 60)}, {\s * sin(1 * 60)}) {C};
    \node at ({\s * cos(2 * 60)}, {\s * sin(2 * 60)}) {B};
    \node at ({\s * cos(3 * 60)}, {\s * sin(3 * 60)}) {G};
    \node at ({\s * cos(4 * 60)}, {\s * sin(4 * 60)}) {F};
    \node at ({\s * cos(5 * 60)}, {\s * sin(5 * 60)}) {E};

    \node at ({(\eo +\sf+.1) * \s * cos(0 * 60)}, {(\eo +\sf+.05)*\s * sin(0 * 60)}) {\footnotesize{\textit{to G}}};
    \node at ({(\eo +\sf+.05) * \s * cos(1 * 60)}, {(\eo +\sf+.05)*\s * sin(1 * 60)}) {\footnotesize{\textit{from F}}};
    \node at ({(\eo +\sf+.05) * \s * cos(2 * 60)}, {(\eo +\sf+.05)*\s * sin(2 * 60)}) {\footnotesize{\textit{to E}}};
    \node at ({(\eo +\sf+.2) * \s * cos(3 * 60)}, {(\eo +\sf+.2)*\s * sin(3 * 60)}) {\footnotesize{\textit{from D}}};
    \node at ({(\eo +\sf+.05) * \s * cos(4 * 60)}, {(\eo +\sf+.05)*\s * sin(4 * 60)}) {\footnotesize{\textit{to C}}};
    \node at ({(\eo +\sf+.05) * \s * cos(5 * 60)}, {(\eo +\sf+.05)*\s * sin(5 * 60)}) {\footnotesize{\textit{from B}}};
\end{tikzpicture}
    \caption{1-D integrality gap, reference 2, unmarked edges have multiplicity 1}
    \label{fig:1dgap2}
\end{figure}
Computer search yielded the complex depicted in Figure \ref{fig:1dgapmain}, with $V_\mathbb{Z}=7$, $V_\mathbb{Q}=6$. These value of $V_\mathbb{Z}$ were computed by the method described in Section \ref{lp_ip_section}. We believe this complex exhibits the simplest example of an integrality gap.

To decompose it into 7 (integral) triangles, one can take the rings through A ($ABE$, $CAF$, $GAD$), together with a decomposition of the boundary hexagon $BCDEFG$ into four triangles.

To see it can decomposed into 6 fractional triangles, note that it suffices to decompose a doubled version, with two copies of each edge, into 12 integer triangles (because then we can give all the integer triangles weight 1/2). And we can indeed do this by combining decompositions of Figure \ref{fig:1dgap1} with Figure \ref{fig:1dgap2} into 6 integer triangles each. Figure \ref{fig:1dgap1} may be decomposed via the rings ($ABE$, $CAF$, $GAD$), together with $ABC$, $ADE$, $AFG$. Figure \ref{fig:1dgap2} may be decomposed as $GBC,\ GCD,\ CDE,\ CEF,\ EFG,\ EGB$. Combining these decompositions and halving gives the desired decomposition into 6 fractional triangles. 

\section{Integrality gaps in triangulations of the 2-sphere}\label{sec_ig_2d}
Using Plantri\footnote{\url{https://users.cecs.anu.edu.au/~bdm/plantri/}, see also \cite{plantri}}, we enumerated triangulations of the 2-sphere and computed $V_\mathbb{Q}$ and $V_\mathbb{Z}$ for each using the technique of Section \ref{lp_ip_section}. To cut down this large search space, \textit{we restricted attention to triangulations of the sphere where each vertex has max degree 6}.

Our results are as follows:
\begin{enumerate}
    \item For $4\leq n\leq 16$ vertices, $V_\mathbb{Q}=V_\mathbb{Z}$.
    \item The first integrality gap appears for $n=17$, where there are two examples with a gap of $.5$, \mbox{both with $V_\mathbb{Q}=23.5,\ V_\mathbb{Z}=24$}. These shapes are are pictured in Figure \ref{fig:n17_gap_polys}, which contains links to interactive models.
\begin{figure}[p]
    \centering
    \includegraphics[width=.3569\linewidth]{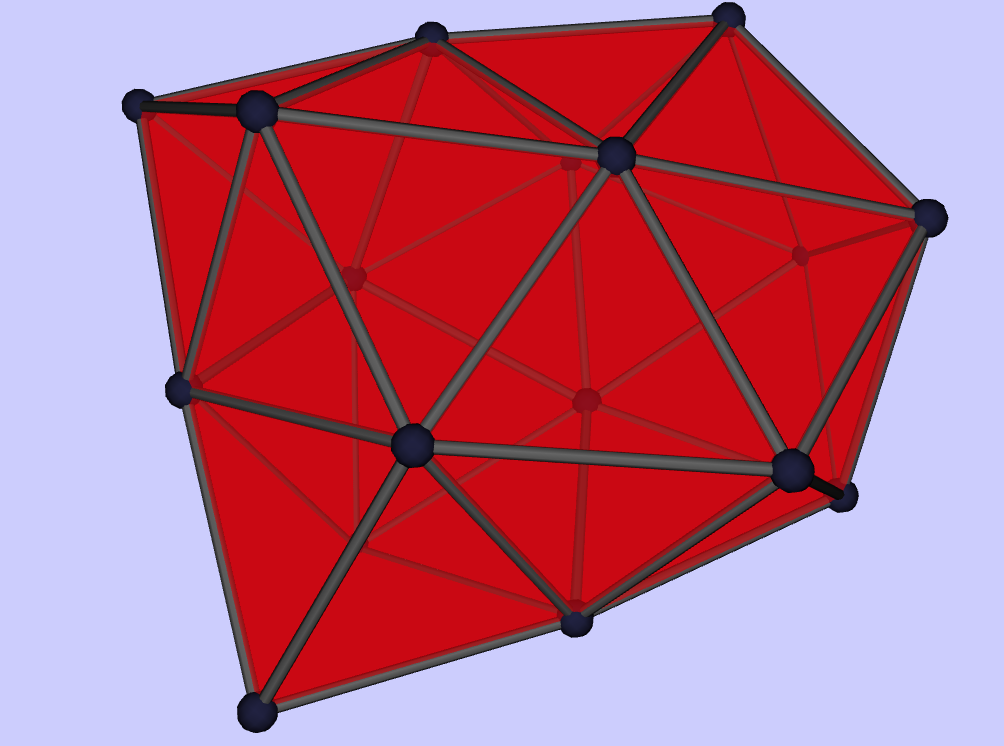}
    \includegraphics[width=.4\linewidth]{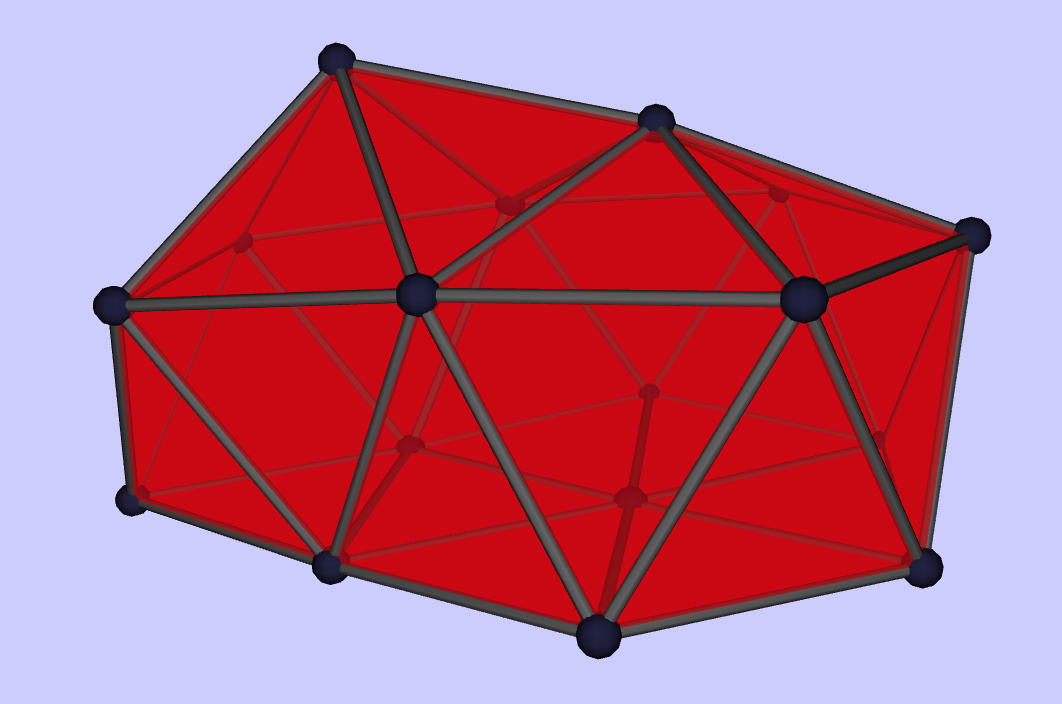}
    \caption{The two triangulations on 17 vertices with integrality gap $1/2$ (models: \href{https://math.dartmouth.edu/~mellison/tripolys/17\_vertices/17\_226/}{left}, \href{https://math.dartmouth.edu/~mellison/tripolys/17\_vertices/17\_181/}{right})\\
    \tiny{Left plantri string$^*$: bcdefg aghic abid acijke adklf aelg aflmhb bgmnoi bhojdc diopk djpqle ekqmgf glqnh hmqpo hnpji jonqk kpnml}\\
    \tiny{Right plantri string: bcdefg aghijc abjkd ackle adlmnf aenog afohb bgopqi bhqlkj bikc cjild dkiqme elqpn empof fnphg honmq hpmli}\\\\
    \tiny{$^*$ To interpret this, we label the 17 vertices by the letters `a' through `q'. The neighboring vertices to `a' are (in counterclockwise order, when viewed from outside) `bcdefg', and so on.}}
    \label{fig:n17_gap_polys}
    \vspace{2cm}
    \includegraphics[width=.54\linewidth]{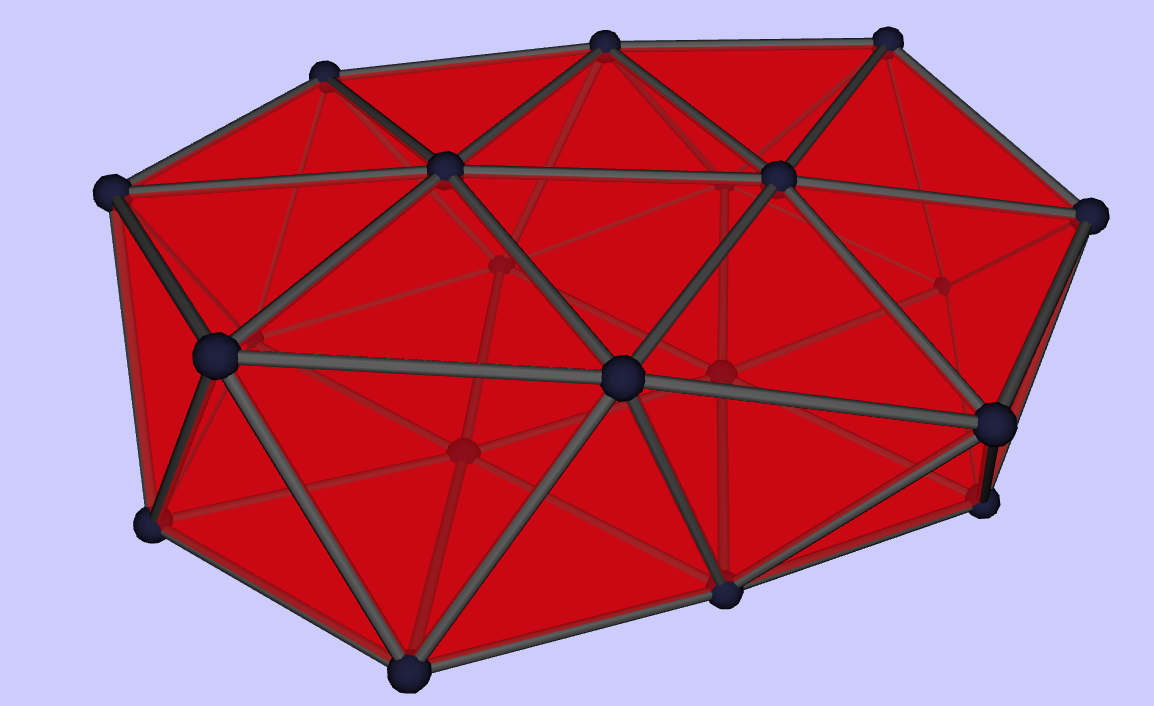}
    \includegraphics[width=.4\linewidth]{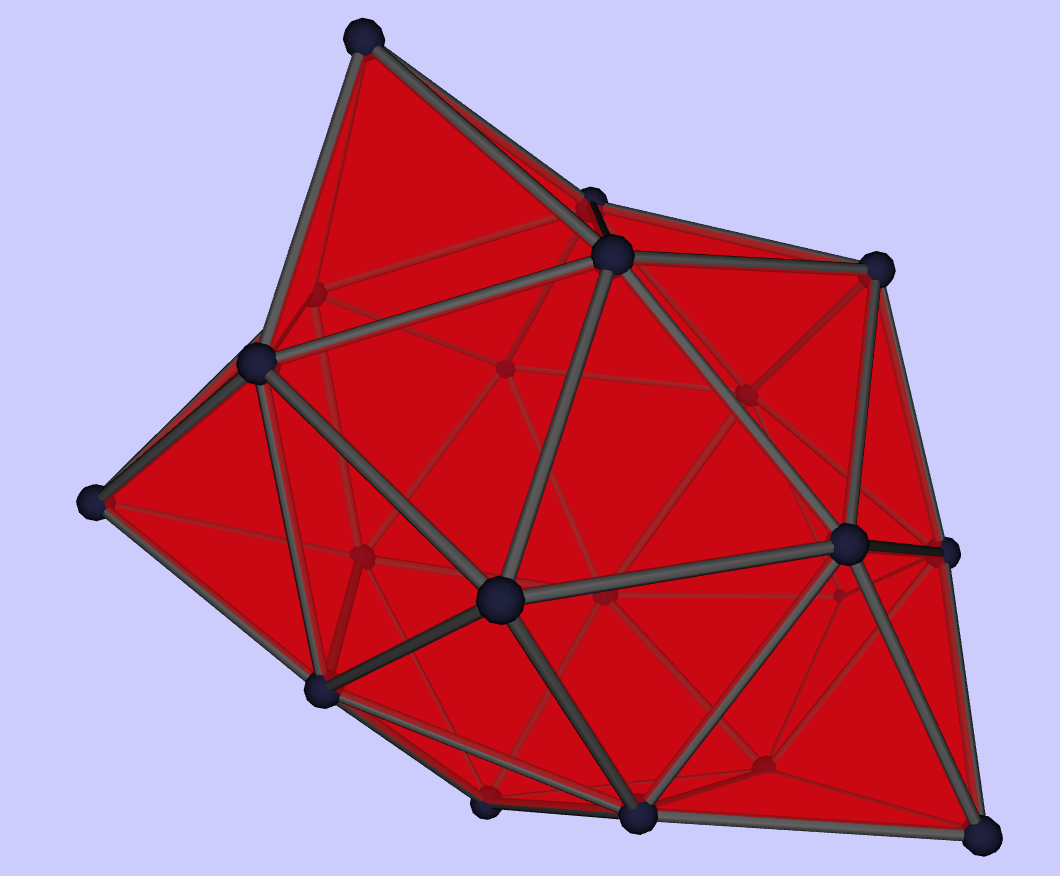}
     \caption{Two triangulations on 20 vertices with gaps $1/5$, left, and $1/3$, right (models: \href{https://math.dartmouth.edu/~mellison/tripolys/20\_vertices/20\_747/}{left}, \href{https://math.dartmouth.edu/~mellison/tripolys/20\_vertices/20\_834/}{right})\\
    \tiny{Left plantri string: bcdefg aghic abijd acjke adklmf aemnog afophb bgpqri bhrjc cirkd djrle ekrqsm elsnf fmsto fntpg gotqh hptslr hqlkji lqtnm nsqpo\\
    \tiny{Right plantri string: bcdefg aghc abhijd acje adjklf aelmg afmnhb bgnoic chopj cipked ejpql ekqrmf flrsng gmsoh hnstpi iotqkj kptsrl lqsm mrqton osqp}}}
     \label{fig:n20_gap_polys}
\end{figure}
    
    \item The first gaps not equal to $1/2$ are $1/5$ and $1/3$, both appearing at $n=20$. One shape with gap $1/5$ is easy to describe as an antiprism of an extended hexagon, the other is not so easy to describe, see Figure \ref{fig:n20_gap_polys}, which contains links to interactive models of both.

\end{enumerate}
Our computations have not systematically progressed beyond $n=20$. The largest gap we have found for this family of triangulations is $2/3$ for a shape on 28 vertices, see also \cite{doyle2023extending}).

It would be great to establish some bounds on the integrality gap for triangulations of the sphere with maximum degree 6 --- as far as we know $\lceil V_\mathbb{Q}\rceil = V_\mathbb{Z}$, which would be a very interesting result if true!\\
We note that in cases where $\lceil V_\mathbb{Q}(\mathcal{K})\rceil = V_\mathbb{Z}(\mathcal{K})$ we can obtain a decomposition achieving $V_\mathbb{Z}$ in polynomial time. Recall that integer programming is generally NP-hard. So, if the statement holds for max-degree-6 triangulations of the sphere, the following procedure computes $V_\mathbb{Z}$ in polynomial time:
\vbox{
\vspace{.15cm}
\hrule
\vspace{.15cm}
For each $(d+1)$-simplex $\tau$ on the vertices of $\mathcal{K}$:
\begin{enumerate}
    \item Remove the boundary of $\tau$ from $\mathcal{K}$ to obtain $\mathcal{K}-\tau$.
    \item If $\lceil V_\mathbb{Q}(\mathcal{K}-\tau)\rceil + 1 = \lceil V_\mathbb{Q}(\mathcal{K})\rceil$, we use $\tau$ in our decomposition and recurse on $\mathcal{K}-\tau$.
\end{enumerate}
\hrule
}
However $\lceil V_\mathbb{Q}\rceil = V_\mathbb{Z}$ does not hold for general triangulations. Consider taking the disjoint union of two copies of one of the gap-1/2 shapes on 17 vertices. By Theorems \ref{thm: disjoint union} and \ref{thm:Z disjoint union} we find a gap of 1, with $V_{\mathbb{Z}}=48$ and $V_{\mathbb{Q}}=47$, and the gap grows linearly with additional copies. Similarly, by Theorems \ref{thm: connected sum} and \ref{thm: Z connected sum}, the connected sum of two 17-vertex gap 1/2 shapes along a triangle has gap $1$, and this gap grows linearly with additional connected summands.\footnote{Connected sum along 4-cycles is not generally additive for $V_\mathbb{Z}$ and $V_\mathbb{Q}$. We took the connected sum of two copies of a 17-vertex, gap 1/2 shape and found that $V_{\mathbb{Z}}=48$, $V_{\mathbb{Q}}=47$ along a certain 4-cycle, but many 4-cycles yield $V_{\mathbb{Z}}=V_{\mathbb{Q}}=46.$} This shows that the ``LPtet hypothesis" as formulated in \cite{doyle2023extending} (version 1) is false.

\section{Future directions}
In computer science, integrality gaps, such as between $V_\mathbb{Z}$ and $V_\mathbb{Q}$, arise in a number of problems, where bounds are proven to establish guarantees on `approximation algorithms' which return the value of a relaxed program. See for example \cite{intgap},\cite{chakrabarty2009allocating},\cite{haxell2023improved}. In our case, this would amount to bounding $V_\mathbb{Z}/V_\mathbb{Q}$ (as a function of $d$, perhaps number of vertices or facets). As far as we know, this gap appears to not have been considered in the literature, and the best bounds would be an interesting set of constants in the theory of simplicial complexes.

Another direction is to apply the constants $V_\mathbb{Z}$ and $V_\mathbb{Q}$ to yield invariants of known families of admissible $d$-complexes. Admissible 1-complexes, sometimes known as `balanced' digraphs \cite{balanced_digraph}, are one such example. $V_\mathbb{Q}$ can also be applied to obtain invariants of weighted digraphs with equal in-weight and out-weight at each vertex --- or, in other words, doubly stochastic matrices over $\mathbb{Q}$. $V_\mathbb{Z}$ and $V_\mathbb{Q}$ could also be applied to yield invariants of oriented polytopes with simplicial facets (or objects with associated such polytopes, perhaps from combinatorics). Note that, in theory at least, the simplicial facets assumption can be dropped via, for example, a minimum over all decompositions of the facets into simplices.

A third direction might be to take a look at an analog of $V_\mathbb{Z}$ and $V_\mathbb{Q}$ for admissible $d$-complexes with the `oriented' condition dropped. A $V_{\mathbb{F}_2}$ can be defined in this case by taking the norm of a chain over $\mathbb{F}_2$ to be the number of non-zero coefficients.

\bibliographystyle{plain}
\bibliography{refs}

\begin{thebibliography}{10}

\bibitem{intgap}
Sanjeev Arora, Béla Bollobás, László Lovász, and Iannis Tourlakis.
\newblock Proving integrality gaps without knowing the linear program.
\newblock {\em Theory of Computing}, 2:19--51, 2006.

\bibitem{balanced_digraph}
Richard~A. Brualdi.
\newblock Combinatorial matrix classes, 2006.

\bibitem{chakrabarty2009allocating}
Deeparnab Chakrabarty, Julia Chuzhoy, and Sanjeev Khanna.
\newblock On allocating goods to maximize fairness, 2009.

\bibitem{doyle2023extending}
Peter Doyle, Matthew Ellison, and Zili Wang.
\newblock Extending a triangulation from the 2-sphere to the 3-ball, 2023.

\bibitem{gromov}
Michael Gromov.
\newblock Volume and bounded cohomology.
\newblock {\em Publications mathématiques de l'I.H.É.S.}, tome 56:5--99, 1982.

\bibitem{haxell2023improved}
Penny Haxell and Tibor Szabó.
\newblock Improved integrality gap in max-min allocation: or topology at the north pole, 2023.

\bibitem{highs}
Q.~Huangfu and J.~A.~J. Hall.
\newblock Parallelizing the dual revised simplex method.
\newblock {\em Mathematical Programming Computation}, 10(1):119--142, 2018.

\bibitem{kt}
Claire Kenyon and William Thurston.
\newblock Rotation distance between binary trees: hyperbolic geometry versus max-flow min-cut. (approximate title).
\newblock {\em (unplublished, contact us for copy)}, 1992 or 1993.

\bibitem{plantri}
Brendan McKay and Gunnar Brinkmann.
\newblock Fast generation of planar graphs (expanded edition).
\newblock {\em Communications in Mathematics and Computational Chemistry}, (58):323--357, 2007.

\bibitem{pournin2021strong}
Lionel Pournin and Zili Wang.
\newblock Strong convexity in flip-graphs, 2021.

\bibitem{stt}
Daniel Sleator, Robert Tarjan, and William Thurston.
\newblock Rotation distance, triangulations, and hyperbolic geometry.
\newblock {\em Journal of the American Mathematical Society}, 1(3):647--680, 1988.

\bibitem{vas}
Leonid Vaserstein.
\newblock {\em Introduction To Linear Programming}.
\newblock Pearson, 1 edition, 2002.

\bibitem{2020SciPy-NMeth}
Pauli Virtanen, Ralf Gommers, Travis~E. Oliphant, Matt Haberland, Tyler Reddy, David Cournapeau, Evgeni Burovski, Pearu Peterson, Warren Weckesser, Jonathan Bright, St{\'e}fan~J. {van der Walt}, Matthew Brett, Joshua Wilson, K.~Jarrod Millman, Nikolay Mayorov, Andrew R.~J. Nelson, Eric Jones, Robert Kern, Eric Larson, C~J Carey, {\.I}lhan Polat, Yu~Feng, Eric~W. Moore, Jake {VanderPlas}, Denis Laxalde, Josef Perktold, Robert Cimrman, Ian Henriksen, E.~A. Quintero, Charles~R. Harris, Anne~M. Archibald, Ant{\^o}nio~H. Ribeiro, Fabian Pedregosa, Paul {van Mulbregt}, and {SciPy 1.0 Contributors}.
\newblock {{SciPy} 1.0: Fundamental Algorithms for Scientific Computing in Python}.
\newblock {\em Nature Methods}, 17:261--272, 2020.

\end{thebibliography}

\end{document}